\newcommand{\deff}{\mbox{$\stackrel{\rm def}{=}$}}
\newcommand{\Span}[1]{{\left\langle {#1} \right\rangle}}
\newcommand{\field}[1]{\mathbb{#1}}
\newcommand{\F}{\field{F}}
\newcommand{\cA}{{\cal A}}
\newcommand{\cB}{{\cal B}}
\newcommand{\cD}{{\cal D}}
\newcommand{\cP}{{\cal P}}
\newcommand{\cH}{{\cal H}}
\newcommand{\cG}{{\cal G}}
\newtheorem{theorem}{Theorem}
\newtheorem{lemma}{Lemma}
\newtheorem{cor}{Corollary}
\newtheorem{conj}{Conjecture}
 \DeclareRobustCommand{\nsbinom}{\genfrac[]\z@{}}
 \newcommand{\sbinomq}[2]{\nsbinom{{#1}}{{#2}}_{q}}
\begin{document}

\bibliographystyle{plain}

\title{
\begin{center}
The $q$-Analog of the Middle Levels Problem
\end{center}
}
\author{
{\sc Tuvi Etzion}\thanks{Department of Computer Science, Technion,
Haifa 32000, Israel, e-mail: {\tt etzion@cs.technion.ac.il}.} }

\maketitle

\begin{abstract}
The well-known middle levels problem is to find a Hammiltonian cycle
in the graph induced from the binary Hamming graph $\cH_2(2k+1)$
by the words of weight $k$ or $k+1$.
In this paper we define the $q$-analog of the middle levels problem.
Let $n=2k+1$ and let~$q$~be a power of a prime number. Consider the set of
$(k+1)$-dimensional subspaces and the set of $k$-dimensional subspaces
of $\F_q^n$. Can these subspaces be ordered in a way that for any
two adjacent subspaces $X$ and $Y$, either $X \subset Y$ or $Y \subset X$?
A construction method which yields many Hamiltonian cycles for any given $q$ and $k=2$ is presented.
\end{abstract}

\vspace{0.5cm}

\noindent {\bf Keywords:} Grassmannian, Hamiltonian cycles, middle levels, necklaces.

\footnotetext[1] { This research was supported in part by the Israeli
Science Foundation (ISF), Jerusalem, Israel, under
Grant 10/12.}

\newpage
\section{Introduction}
\label{sec:introduction}

Let $\cH_2(n)$ denote the $n$-dimensional binary Hamming graph, known also
as the $n$-dimensional hypercube. The graph has a set of $2^n$ vertices
which are represented by the set of all the binary $n$-tuples. Two vertices are adjacent if
the two $n$-tuples $(a_1,a_2,\ldots,a_n)$ and $(b_1,b_2,\ldots,b_n)$
which represent them differ in exactly one position.
The \emph{weight} of a binary $n$-tuple
is defined as the number of position in which it has \emph{ones}.
The \emph{middle levels graph}
$M_{2k+1}$ is a subgraph of $\cH_2(2k+1)$ induced by the vertices represented
by $n$-tuples of weight $k$ or weight $k+1$ in $\cH_2(2k+1)$. The
\emph{middle levels conjecture} asserts that the graph $M_{2k+1}$ has
a Hamiltonian cycle for every positive integer $k$.
This conjecture was formulated first by~\cite{Hav83}.
Using computer search it was verified independently by various sources that
$M_{2k+1}$ has a Hamiltonian cycle for $1 \leq k \leq 15$. This was also verified
for $k=16$ and $k=17$ in~\cite{SSS09} and for $k=18$ in~\cite{ShAm11}.
In~\cite{SaWi95} it was shown how to produce long cycles in $M_{2k+1}$ based
on a Hamiltonian cycle in a graph $M_{2k'+1}$ for which $k' < k$. Finally, in~\cite{Joh04} it
is shown that asymptotically a Hamiltonian cycle exists in $M_{2k+1}$, i.e., that there
exists a cycle of length $(1- o(1)) 2 \binom{2k+1}{k}$ in $M_{2k+1}$.

In this paper we are interested in the $q$-analog of the
middle levels problem. Let $\F_q^n$ be the vector space
of dimension $n$ over the finite field with $q$ elements $\F_q$.
Let $\cP_q(n)$ be the graph whose set of vertices represents the set of all
subspaces of $\F_q^n$. $\cP_q(n)$ is called the \emph{projective space graph}.
Two subspaces $X$ and $Y$ are connected by
an edge if $\dim X + \dim Y - 2 \dim (X \cap Y) =1$, i.e., the dimensions
of $X$ and $Y$ differ by one and either $X \subset Y$ or $Y \subset X$.
The \emph{middle levels} of $\cP_q(2k+1)$ is the graph which is induced
by the vertices (subspaces) of dimension $k$ and the
vertices (subspaces) of dimension $k+1$ from $\cP_q(2k+1)$.
The \emph{middle levels problem} for $\cP_q(2k+1)$
is to find a Hamiltonian cycle
in the middle levels of $\cP_q(2k+1)$.

One of the most important methods to construct long cycles in $M_{2k+1}$ was
introduced in~\cite{Dej85}.
A \emph{necklace} is a set which consists of an $n$-tuple
and all its distinct cyclic shifts.
Let $n=2k+1$ and assume
that there exists a sequence of $n$-tuples $P=x_1 , y_1 , x_2 , y_2 , \ldots , x_t , y_t$
satisfying the following requirements.
\begin{itemize}
\item $x_i$, $1 \leq i \leq t$, is an $n$-tuple with weight $k$.
\item $y_i$, $1 \leq i \leq t$, is an $n$-tuple with weight $k+1$.
\item All the $2t$ $n$-tuples in the sequence are contained in $2t$
different necklaces.
\item $x_i$ and $y_i$, $1 \leq i \leq t$, are connected by an edge in $M_{2k+1}$.
\item $y_i$ and $x_{i+1}$, $1 \leq i \leq t-1$, are connected by an edge in $M_{2k+1}$.
\item There exists a cyclic shift of $x_1$ by $\ell$ positions which yields an
$n$-tuple $x_1'$, such that $y_t$ and $x_1'$ are connected by an edge
in $M_{2k+1}$ and $\gcd ( \ell ,n)=1$.
\end{itemize}
We form the sequence $\Pi \deff P^0 =P , P^\ell , P^{2\ell} , P^{3 \ell} , \ldots$,
where superscripts are taken modulo $n$.
$P^i$ is the sequence formed from~$P$ by taking the cyclic shifts
by $i$ positions of all the $n$-tuples in~$P$, where the order
of the $n$-tuples in $P$ and $P^i$ is the same. Hence, the sequence $\Pi$ ends
with the subsequence $P^{n-\ell} =P^{(n-1)\ell}$ since $\gcd ( \ell ,n)=1$.

We suggest a method, akin to the one based
on necklaces to solve the middle levels problem
of $\cP_q(2k+1)$. The method is applied successfully
and very simply with $k=1$.
For $k=2$, we note that even though the value of $k$ is small,
the graph can be very large.
The number of two-dimensional subspaces of $\F_q^5$ is
$\frac{(q^5-1)(q^4-1)}{(q^2-1)(q-1)} = (q^4+q^3+q^2+q+1)(q^2+1)$ which
is getting larger as $q$ increases.
The method is also applied successfully in this case
as will be proved in this paper. It is worth to mention that
a similar $q$-analog problem to find universal cycles
of two-dimensional subspaces of $\cP_q(n)$ is solved
in~\cite{JBM09}. For this problem, the one-dimensional
subspaces of $\cP_q(n)$ are ordered cyclically in such a way that each
two-dimensional subspace of $\cP_q(n)$ is spanned by
exactly one pair of adjacent one-dimensional subspaces
and each one-dimensional subspace of $\cP_q(n)$ appears
at least once in the ordering.

The rest of this paper is organized as follows.
In Section~\ref{sec:cyclic_shifts}, we discuss the
representation of subspaces. We define the cyclic shifts
of subspaces and describe our method to form long cycles
in the middle levels of $\cP_q(2k+1)$. In
Section~\ref{sec:2_3subspaces}, we prove some properties
of two-dimensional subspaces and three-dimensional subspaces
of $\F_q^5$ which will be very useful when our method is applied
on $\cP_q(5)$. In Section~\ref{sec:Hamilton}, we will show how to construct
Hamiltonian cycles in the middle levels of $\cP_q(5)$.
Conclusion is given in Section~\ref{sec:conclusion}.

\section{Hamiltonian cycles based on cyclic shifts}
\label{sec:cyclic_shifts}

Given a nonnegative integer $r \leq n$, the set of all subspaces
of $\F_q^n$ that have dimension $r$ is known as a \emph{Grassmannian},
and usually denoted by $\cG_q(n,r)$. The number of subspaces in
$\cG_q(n,r)$ is
$$
| \cG_q(n,r) | = \sbinomq{n}{r} \deff \frac{(q^n-1)(q^{n-1}-1) \ldots (q^{n-r+1}-1)}{(q^r-1)(q^{r-1}-1) \ldots (q-1)}~,
$$
where $\sbinomq{n}{r}$ is the \emph{$q$-ary Gaussian coefficient}.
We note that the number of one-dimensional subspaces of $\F_q^n$
is $\frac{q^n-1}{q-1}$, where two vectors of $\F_q^n$ which are
multiples of each other by an element of $\F_q$ belong to the
same one-dimensional subspace.

Let $\F_{q^n}$ be a finite field with $q^n$ elements, where $q$ is
a power of a prime number,
and let $\alpha$ be a primitive element in $\F_{q^n}$.
It is well-known that there is an isomorphism between $\F_{q^n}$ and
$\F_q^n$, where the \emph{zero} elements are mapped into each other,
and $\alpha^i \in \F_{q^n}$, $0 \leq i \leq q^n-2$, is mapped into its
$q$-ary $n$-tuple representation in $\F_q^n$, and vice versa.
Assume that $\alpha_i, \alpha^j \in \F_{q^n}$
and their $n$-tuple representations are $X^{i} = (x^{i}_1,x^{i}_2 ,\ldots , x^{i}_n)$
and $X^{j} = (x^{j}_1,x^{j}_2 ,\ldots , x^{j}_n)$, respectively, where
$x^{i}_\ell , x^{j}_\ell \in \F_q$, $1 \leq \ell \leq n$.
The isomorphism implies that
the $n$-tuple $(x^{i}_1 + x^{j}_1,x^{i}_2+x^{j}_2 ,\ldots , x^{i}_n+x^{j}_n)$, where
the addition is done in $\F_q$, is the $n$-tuple representation of $\alpha^i + \alpha^j$.
Using this mapping, a subspace
of~$\F_q^n$ is represented by the corresponding elements of~$\F_{q^n}$.
Throughout this paper we will not distinguish between the two representations
and the vector representation will coincide with the finite field representation.
Moreover, we will also abuse notation by not distinguishing between
one-dimensional subspaces and elements in the field. This will imply that
$\alpha^i$ and $\beta \alpha^i$, $\beta \in \F_q \setminus \{ 0 \}$,
which are contained in the same one-dimensional subspace, will
be considered as the same element. Using this notation and omitting the zero vector
from the representation of a subspace, an $r$-dimensional subspace will
be defined by its $\frac{q^r-1}{q-1}$ nonzero elements.
Finally, if
$X = \{ \alpha^{i_1} , \ldots , \alpha^{i_{(q^r-1)/(q-1)}} \}$
is an $r$-dimensional subspace of $\F_q^n$ then we define the
$r$-dimensional subspace $\beta X$ by
$\beta X \deff \{ \beta \alpha^{i_1} , \ldots , \beta \alpha^{i_{(q^r-1)/(q-1)}} \}$,
for any $\beta \in \F_q^n \setminus \{ {\bf 0} \}$.

We define the following relation $E_r$ on the elements of $\cG_q(n,r)$.
\begin{equation}
\label{eq:equiv}
\text{For} ~~ X,~Y \in \cG_q(n,r), ~~ (X,Y) \in E_r ,~~ \text{if} ~~ Y = \alpha^j X~~ \text{for~some}~j~.
\end{equation}
The relation $E_r$ will be applied for $r=k$ and $r=k+1$,
where $n=2k+1$.

The following three lemmas are essential in the discussion which follows.
The first lemma can be readily verified.
\begin{lemma}
$E_r$ is an equivalence relation.
\end{lemma}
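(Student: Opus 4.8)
The plan is to verify the three defining properties of an equivalence relation for $E_r$ directly, using the fact that the nonzero powers of a primitive element $\alpha$ form a cyclic group isomorphic to $\F_{q^n}^* $ under multiplication.

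\emph{Reflexivity.} For any $X \in \cG_q(n,r)$, we have $X = \alpha^0 X = 1 \cdot X$, and since $\alpha^0$ is a legitimate power of $\alpha$, this shows $(X,X) \in E_r$. (One must note that multiplying a subspace by a nonzero scalar produces a subspace of the same dimension — this is because $v \mapsto \alpha^j v$ is an $\F_q$-linear bijection of $\F_q^n$, hence maps $r$-dimensional subspaces to $r$-dimensional subspaces; so $\alpha^j X$ is indeed an element of $\cG_q(n,r)$ and $E_r$ is well defined.)

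\emph{Symmetry.} Suppose $(X,Y) \in E_r$, so $Y = \alpha^j X$ for some integer $j$. Multiplying both sides by $\alpha^{-j} = \alpha^{q^n - 1 - j}$ (again a power of $\alpha$) gives $X = \alpha^{-j} Y$, hence $(Y,X) \in E_r$.

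\emph{Transitivity.} Suppose $(X,Y) \in E_r$ and $(Y,Z) \in E_r$, so $Y = \alpha^j X$ and $Z = \alpha^{i} Y$ for some integers $i,j$. Then $Z = \alpha^{i}(\alpha^{j} X) = \alpha^{i+j} X$, and since $\alpha^{i+j}$ is a power of $\alpha$, we get $(X,Z) \in E_r$. Together these three facts establish that $E_r$ is an equivalence relation. There is no real obstacle here — the only point requiring a moment's care is the well-definedness remark, namely that scaling by a nonzero field element preserves the dimension of a subspace, which is why each $\alpha^j X$ lies in $\cG_q(n,r)$ again; everything else is an immediate consequence of the group structure of $\langle \alpha \rangle = \F_{q^n}^*$.
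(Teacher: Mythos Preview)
Your proof is correct and is exactly the routine verification the paper has in mind; the paper itself gives no proof, simply stating that the lemma ``can be readily verified.'' Your check of reflexivity, symmetry, and transitivity using the group structure of $\langle\alpha\rangle=\F_{q^n}^*$ is the intended argument.
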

\begin{lemma}
\label{lem:full_cycle}
If $\gcd ( \frac{q^r-1}{q-1} , \frac{q^n-1}{q-1} ) =1$ then the number of
elements in an equivalence class of $E_r$ is~$\frac{q^n-1}{q-1}$.
\end{lemma}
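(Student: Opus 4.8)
The plan is to study the action of the cyclic group generated by multiplication by $\alpha$ on the Grassmannian $\cG_q(n,r)$, and to show that under the stated coprimality hypothesis the stabilizer of every $r$-dimensional subspace is trivial (more precisely, consists only of the scalars $\F_q^*$, which already act trivially on subspaces). Since the orbit of $X$ under $E_r$ is exactly $\{\alpha^j X : j\}$, and multiplying by $\alpha^{\frac{q^n-1}{q-1}}$ is the same as multiplying by a generator of $\F_q^*$ (because $\alpha^{\frac{q^n-1}{q-1}}$ has multiplicative order $q-1$ and hence generates $\F_q^*$), the map $j \mapsto \alpha^j X$ factors through $\Z/\frac{q^n-1}{q-1}\Z$; so the orbit size divides $\frac{q^n-1}{q-1}$, and it suffices to rule out any smaller period.

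First I would set $N = \frac{q^n-1}{q-1}$ and $M = \frac{q^r-1}{q-1}$, and suppose for contradiction that $\alpha^j X = X$ for some $j$ with $0 < j < N$; we may take $j \mid N$ (replace $j$ by $\gcd(j,N)$, using that $E_r$ is an equivalence relation so the set of such $j$ together with $N$ is closed under gcd). Then $X$ is invariant under multiplication by $\beta := \alpha^j$, so $X$ is a vector space over the subfield $\F_q(\beta) = \F_{q^m}$, where $q^m$ is the size of the smallest field containing $\F_q$ and $\beta$. Being an $\F_{q^m}$-vector space of $\F_q$-dimension $r$ forces $m \mid r$, and moreover $\F_{q^m} \subseteq X$ would be needed — more carefully, the multiplicative order of $\beta$ is $\frac{N}{j}$, which divides $\frac{N}{1}$; I would argue that an $\F_q$-subspace closed under multiplication by $\beta$ is in particular closed under multiplication by the subfield generated by $\beta$, and then count: if $X$ is a $d$-dimensional $\F_{q^m}$-space then $r = dm$ and the number of one-dimensional $\F_q$-subspaces in $X$ is $\frac{q^{dm}-1}{q-1} = \frac{q^r-1}{q-1} = M$, which must be divisible by $\frac{q^m-1}{q-1} > 1$ when $m > 1$. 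The key point is then that $\frac{q^m-1}{q-1}$ divides both $M$ and $N$ (the latter because $m \mid n$ whenever $\F_{q^m}$ embeds compatibly — indeed $\beta \in \F_{q^n}$ forces $m \mid n$), contradicting $\gcd(M,N) = 1$ unless $m = 1$, i.e. $\beta \in \F_q$, i.e. $j$ is a multiple of $N$.

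The step I expect to be the main obstacle is the transition "$X$ closed under multiplication by $\beta = \alpha^j$ implies $X$ is a vector space over a genuine subfield $\F_{q^m}$ with $m \mid n$ and $m \mid r$, and the number of one-dimensional $\F_q$-subspaces of $X$ is divisible by $\frac{q^m-1}{q-1}$." One has to be careful that $\F_q(\beta)$ really sits inside $\F_{q^n}$ (clear, since $\beta = \alpha^j \in \F_{q^n}$) and that $X$, being an $\F_q$-subspace stable under the field $\F_q(\beta)$, is an honest $\F_q(\beta)$-subspace; then the orbit of any nonzero vector of $X$ under $\F_q(\beta)^*$ gives the divisibility of $M$ by $\frac{q^m-1}{q-1}$, and the standard fact $m \mid n \iff \F_{q^m} \subseteq \F_{q^n}$ gives the divisibility of $N$ by $\frac{q^m-1}{q-1}$. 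Once $m > 1$ is excluded, the orbit of $X$ under $E_r$ has full size $N = \frac{q^n-1}{q-1}$, which is exactly the claim; I would close by remarking that the scalars $\F_q^*$ act trivially, so the effective group is $\Z/N\Z$ acting freely, confirming the orbit size.
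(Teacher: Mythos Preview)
Your argument is correct, and it takes a somewhat more structural route than the paper's. The paper proceeds by pure counting: if $j$ is the least positive integer with $\alpha^{j}X=X$, then the cyclic group $\langle\alpha^{j}\rangle$ of order $(q^{n}-1)/j$ acts freely on $X\setminus\{0\}$, so $(q^{n}-1)/j$ divides $q^{r}-1$; since also $\F_q^{*}\subseteq\langle\alpha^{j}\rangle$, one obtains that $N/j$ divides $M$ (and trivially $N$), forcing $j=N$ under the hypothesis $\gcd(M,N)=1$. You instead pass to the subfield $\F_{q^{m}}=\F_q(\alpha^{j})$ and observe that $X$ becomes an $\F_{q^{m}}$-vector space, whence $m\mid r$ and $m\mid n$, producing the common factor $(q^{m}-1)/(q-1)$. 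Your approach is a bit longer but more conceptual: it explains \emph{why} a short orbit forces a nontrivial intermediate field, and the idea generalises cleanly to other questions about Singer orbits on Grassmannians. The paper's argument is terser and yields a sharper common divisor ($N/j$, which is in general a proper multiple of your $(q^{m}-1)/(q-1)$), but for the present lemma either suffices. One small point worth tightening in your write-up: after reducing to $j\mid N$ with $0<j<N$, you should state explicitly that $m=1$ would force $N\mid j$ and hence $j=N$, contradicting $j<N$; you have all the pieces, but the final sentence (``once $m>1$ is excluded\ldots'') reads as if $m=1$ were the desired outcome rather than another impossibility.
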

\begin{proof}
Let $X$ be an $r$-dimensional subspace of $\cP_q(n)$ represented
as $X= \{ x_1 , x_2 , \ldots , x_{\frac{q^r-1}{q-1}} \}$. If $X = \alpha^j X$
for some $j$, $1 \leq j < \frac{q^n-1}{q-1}$ then clearly $X = \alpha^{j \ell} X$
for any positive integer $\ell$. Moreover, if such $j$ exists then
$x_i = \alpha^{j \frac{q^r-1}{q-1}} x_i$ for each $i$, $1 \leq i \leq \frac{q^r-1}{q-1}$.

Assume that there
exists an equivalence class of $E_r$ whose size is $j$, $j < \frac{q^n-1}{q-1}$.
This implies that $X = \alpha^j X$ and hence for each $i$, $1 \leq i \leq \frac{q^r-1}{q-1}$,
we have $x_i = \alpha^{j \frac{q^r-1}{q-1}} x_i$. Therefore, $\frac{q^n-1}{q-1}$
divides $j \frac{q^r-1}{q-1}$, but since $j < \frac{q^n-1}{q-1}$ it follows
that $\gcd ( \frac{q^r-1}{q-1} , \frac{q^n-1}{q-1} ) > 1$,
Hence, if $\gcd ( \frac{q^r-1}{q-1} , \frac{q^n-1}{q-1} ) =1$ then there
is no equivalence class of $E_r$ whose size is less than $\frac{q^n-1}{q-1}$.
\end{proof}
Finally, the following lemma is also trivial.
\begin{lemma}
$\gcd ( \frac{q^k-1}{q-1} , \frac{q^n-1}{q-1} ) =1$ if and only if
$\gcd (k,n) =1$.
\end{lemma}
\begin{cor}
\label{cor:gcd}
If $n=2k+1$ then
$\gcd ( \frac{q^k-1}{q-1} , \frac{q^n-1}{q-1} ) =1$
and $\gcd ( \frac{q^{k+1}-1}{q-1} , \frac{q^n-1}{q-1} ) =1$.
\end{cor}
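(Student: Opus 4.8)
The plan is to reduce everything to the preceding lemma, which states that $\gcd\!\left(\frac{q^r-1}{q-1},\frac{q^n-1}{q-1}\right)=1$ if and only if $\gcd(r,n)=1$. So the only thing that needs checking is that the integer exponents $k$ and $k+1$ are each coprime to $n=2k+1$.

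First I would verify $\gcd(k,2k+1)=1$. This is immediate from the Euclidean algorithm: $2k+1 = 2\cdot k + 1$, so $\gcd(k,2k+1)=\gcd(k,1)=1$. Next I would verify $\gcd(k+1,2k+1)=1$: since $2k+1 = 1\cdot(k+1) + k$ and then $k+1 = 1\cdot k + 1$, we get $\gcd(k+1,2k+1)=\gcd(k+1,k)=\gcd(k,1)=1$.

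Having established $\gcd(k,n)=1$ and $\gcd(k+1,n)=1$, I would invoke the preceding lemma twice, once with $r=k$ and once with $r=k+1$, to conclude $\gcd\!\left(\frac{q^k-1}{q-1},\frac{q^n-1}{q-1}\right)=1$ and $\gcd\!\left(\frac{q^{k+1}-1}{q-1},\frac{q^n-1}{q-1}\right)=1$, which is exactly the claim.

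There is essentially no obstacle here: all the real content sits in the two lemmas that precede the corollary, and what remains is the routine observation that consecutive-type exponents around $2k+1$ are automatically coprime to it. The statement is recorded separately only because it is the precise hypothesis under which Lemma~\ref{lem:full_cycle} applies to the Grassmannians $\cG_q(n,k)$ and $\cG_q(n,k+1)$ in the middle-levels setting.
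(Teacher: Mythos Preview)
Your proposal is correct and matches the paper's approach exactly: the paper states the corollary immediately after the lemma asserting that $\gcd\!\left(\frac{q^r-1}{q-1},\frac{q^n-1}{q-1}\right)=1$ iff $\gcd(r,n)=1$ and gives no separate proof, so the intended argument is precisely the routine check that $\gcd(k,2k+1)=\gcd(k+1,2k+1)=1$ followed by two applications of that lemma.
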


We will modify the construction, based on necklaces,
for a cycle in the middle levels
of the Hamming graph to form a cycle in the middle levels
of the projective space graph $\cP_q(2k+1)$. Instead of necklaces we will
use the equivalence classes of the relation $E_r$. The construction
is implied by the following theorem.
\begin{theorem}
\label{thm:long_cycle}
Assume that there exists a sequence of subspace
of~$\F_q^{2k+1}$, $P=X_1 , Y_1 , X_2 , Y_2 , \ldots , Y_t , Y_t$,
satisfying the following requirements.
\begin{itemize}
\item[P.1] $X_i$, $1 \leq i \leq t$, is a $k$-dimensional
subspace of $\F_q^{2k+1}$.
\item[P.2] $Y_i$, $1 \leq i \leq t$, is a $(k+1)$-dimensional
subspace of $\F_q^{2k+1}$.
\item[P.3] All the $2t$ subspaces of $\F_q^{2k+1}$ in the sequence $P$ are contained in $2t$
different equivalence classes of $E_k$ and $E_{k+1}$.
\item[P.4] $X_i$ and $Y_i$, $1 \leq i \leq t$, are connected by an edge in $\cP_q(2k+1)$.
\item[P.5] $Y_i$ and $X_{i+1}$, $1 \leq i \leq t-1$, are connected by an edge in $\cP_q(2k+1)$.
\item[P.6] $\alpha^\ell X_1$ and
$Y_t$ are connected by an edge in $\cP_q(2k+1)$ and $\gcd ( \ell ,\frac{q^{2k+1}-1}{q-1})=1$.
\end{itemize}
Then there exists a cycle of length $2t\frac{q^{2k+1}-1}{q-1}$ in the middle levels
of $\cP_q(2k+1)$.
\end{theorem}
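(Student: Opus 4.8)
The plan is to mimic the classical necklace construction, using equivalence classes of $E_k$ and $E_{k+1}$ in place of necklaces. The key observation is that the map $X \mapsto \alpha X$ is an automorphism of $\cP_q(2k+1)$: since $Z \mapsto \alpha Z$ is an $\F_q$-linear bijection of $\F_q^{2k+1}$, it preserves dimension and containment, hence preserves adjacency in $\cP_q(2k+1)$. Consequently, if we set $P^{(j)} \deff \alpha^{j\ell} X_1, \alpha^{j\ell} Y_1, \ldots, \alpha^{j\ell} X_t, \alpha^{j\ell} Y_t$, then each $P^{(j)}$ is itself a path in the middle levels (by P.4 and P.5 applied to the automorphism $\alpha^{j\ell}$), and by P.6 the last vertex $\alpha^{j\ell} Y_t$ of $P^{(j)}$ is adjacent to $\alpha^{(j+1)\ell} X_1$, the first vertex of $P^{(j+1)}$. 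Writing $N = \frac{q^{2k+1}-1}{q-1}$, I would then concatenate $P^{(0)}, P^{(1)}, \ldots, P^{(N-1)}$ and close the cycle: since $\gcd(\ell, N)=1$, after $N$ steps we return to $\alpha^{N\ell} X_1 = X_1$ (as $\alpha^N X = X$ for every subspace $X$, because $\alpha^N$ acts as scalar multiplication on $\F_q^{2k+1}$), so $\alpha^{(N-1)\ell} Y_t$ is adjacent to $X_1$ and the walk closes up into a cycle $\Pi$ of length $2tN = 2t\frac{q^{2k+1}-1}{q-1}$.

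It remains to verify that $\Pi$ visits $2tN$ \emph{distinct} subspaces — this is the main obstacle, exactly as in the Hamming case. There are two things to check. First, within a single block $P^{(j)}$, all $2t$ subspaces are distinct: the automorphism $\alpha^{j\ell}$ applied to the $2t$ subspaces of $P$, which are distinct by P.3 (they even lie in distinct equivalence classes), yields $2t$ distinct subspaces. Second, and crucially, subspaces from two different blocks $P^{(j)}$ and $P^{(j')}$ with $j \neq j'$ are distinct. Suppose $\alpha^{j\ell} W = \alpha^{j'\ell} W'$ for two subspaces $W, W'$ appearing in $P$. If $W$ and $W'$ have different dimensions this is impossible; if they have the same dimension $r \in \{k, k+1\}$, then $W' = \alpha^{(j-j')\ell} W$, so $W$ and $W'$ lie in the same equivalence class of $E_r$, forcing $W = W'$ by P.3. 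But then $W = \alpha^{(j-j')\ell} W$, so the equivalence class of $W$ has size dividing $(j-j')\ell \bmod N$; since $0 < |j-j'| < N$ and $\gcd(\ell,N)=1$, we have $(j-j')\ell \not\equiv 0 \pmod N$, so this size is strictly less than $N$, contradicting Lemma~\ref{lem:full_cycle} together with Corollary~\ref{cor:gcd} (which guarantees that every equivalence class of $E_k$ and of $E_{k+1}$ in $\cP_q(2k+1)$ has size exactly $N$). Hence all $2tN$ subspaces are distinct.

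Finally I would note that every vertex of $\Pi$ has dimension $k$ or $k+1$, so $\Pi$ indeed lies in the middle levels of $\cP_q(2k+1)$, and that consecutive vertices are adjacent by the cases enumerated above (edges of type $X_i$–$Y_i$, $Y_i$–$X_{i+1}$ inside a block, and $Y_t$–$X_1$ across blocks, all transported by the appropriate power of the automorphism $X \mapsto \alpha X$). This completes the construction of a cycle of the claimed length.
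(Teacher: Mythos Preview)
Your proof is correct and follows essentially the same route as the paper's: concatenate the shifted paths $\alpha^{j\ell}P$ for $0\le j<N$, use P.6 to glue consecutive blocks, and invoke Lemma~\ref{lem:full_cycle} together with Corollary~\ref{cor:gcd} to show that the $2tN$ vertices are pairwise distinct. Your write-up is in fact more explicit than the paper's (which asserts disjointness of the $P^{i\ell}$ rather tersely); the only cosmetic point is that the phrase ``has size dividing $(j-j')\ell \bmod N$'' would read more cleanly as ``$(j-j')\ell\equiv 0\pmod N$, hence $j\equiv j'\pmod N$ since $\gcd(\ell,N)=1$,'' but the intended argument is clear and sound.
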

\begin{proof}
By P.1, P.2, P.3, P.4, and P.5, we have that $P$ is a path in the middle
levels of $\cP_q(2k+1)$.
By Lemma~\ref{lem:full_cycle} and Corollary~\ref{cor:gcd}, we have
that the number of subspaces in each equivalence class of $E_k$
and each equivalence class of $E_{k+1}$ is $\frac{q^{2k+1}-1}{q-1}$.
By Corollary~\ref{cor:gcd}, we also have that
the paths $P^{i \ell}$ and $P^{j \ell }$,
$0 \leq i < j \leq \frac{q^{2k+1}-1}{q-1}-1$,
where superscripts are taken modulo $\frac{q^{2k+1}-1}{q-1}$, are disjoint.
Taking P.6 into account too implies that $P^{i \ell} , P^{(i+1) \ell}$,
$0 \leq i \leq \frac{q^{2k+1}-1}{q-1}-1$,
is also a path in the middle levels of $\cP_q(2k+1)$.
Thus, $P^0 , P^\ell , P^{2 \ell} , \ldots , P^{\frac{q^{2k+1}-1}{q-1}-\ell}$
is a cycle in the middle levels of $\cP_q(2k+1)$.
\end{proof}
\begin{cor}
\label{cor:long_cycle}
If the requirements of Theorem~\ref{thm:long_cycle} are
satisfied with $t=\sbinomq{2k+1}{k} \frac{q-1}{q^{2k+1}-1}$
then there exists a Hamiltonian cycle in the middle levels
of $\cP_q(2k+1)$.
\end{cor}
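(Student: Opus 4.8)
The plan is to read this off from Theorem~\ref{thm:long_cycle} as a pure counting statement. First I would record that the prescribed $t = \sbinomq{2k+1}{k}\,\frac{q-1}{q^{2k+1}-1}$ is indeed a positive integer: by Lemma~\ref{lem:full_cycle} and Corollary~\ref{cor:gcd} every equivalence class of $E_k$ on $\cG_q(2k+1,k)$ has exactly $\frac{q^{2k+1}-1}{q-1}$ elements, so the number of such classes equals $\sbinomq{2k+1}{k}\,\frac{q-1}{q^{2k+1}-1}$, which is precisely $t$; the same count holds for $E_{k+1}$. Thus, with this value of $t$, requirement~P.3 says exactly that the $X_i$ run once through each equivalence class of $E_k$ and the $Y_i$ run once through each equivalence class of $E_{k+1}$.

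Next I would substitute $t$ into the length formula furnished by Theorem~\ref{thm:long_cycle}: the cycle it produces in the middle levels of $\cP_q(2k+1)$ has length
$$
2t\,\frac{q^{2k+1}-1}{q-1} \;=\; 2\sbinomq{2k+1}{k}\,\frac{q-1}{q^{2k+1}-1}\cdot\frac{q^{2k+1}-1}{q-1} \;=\; 2\sbinomq{2k+1}{k}.
$$
On the other hand, by definition the middle levels of $\cP_q(2k+1)$ has $\sbinomq{2k+1}{k}+\sbinomq{2k+1}{k+1}$ vertices, and by the symmetry $\sbinomq{n}{r}=\sbinomq{n}{n-r}$ of the Gaussian coefficient (applied with $n=2k+1$, $r=k$, so $n-r=k+1$) this number is also $2\sbinomq{2k+1}{k}$. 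Hence the cycle has exactly as many vertices as the graph does.

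Finally I would invoke the fact, established inside the proof of Theorem~\ref{thm:long_cycle}, that the cycle $P, P^{\ell}, P^{2\ell},\dots$ actually visits $2t\,\frac{q^{2k+1}-1}{q-1}$ \emph{distinct} subspaces --- the shifted copies $P^{i\ell}$ are pairwise disjoint by Corollary~\ref{cor:gcd}, and the $2t$ subspaces within a single copy are distinct since they lie in distinct classes of $E_k$ and $E_{k+1}$. A cycle that passes through $N$ distinct vertices of a graph on $N$ vertices is a Hamiltonian cycle, so the corollary follows. I do not expect any real obstacle here: apart from the elementary identity $\sbinomq{n}{r}=\sbinomq{n}{n-r}$, everything is already in place from Theorem~\ref{thm:long_cycle} and its proof, and the argument is just a substitution followed by a vertex count.
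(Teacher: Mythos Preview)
Your proposal is correct and is exactly the intended reading of the corollary: the paper states it without proof, treating it as immediate from Theorem~\ref{thm:long_cycle}, and your argument simply spells out the implicit vertex count $2t\cdot\frac{q^{2k+1}-1}{q-1}=2\sbinomq{2k+1}{k}=\abs{\cG_q(2k+1,k)}+\abs{\cG_q(2k+1,k+1)}$ together with the observation (already contained in the proof of Theorem~\ref{thm:long_cycle}) that the vertices on the cycle are distinct.
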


The requirements of Theorem~\ref{thm:long_cycle} can be easily
satisfied with $t=\sbinomq{2k+1}{k} \frac{q-1}{q^{2k+1}-1}$,
for any $q$ and $k=1$. For this purpose we need the discussion
in~\cite{Sin38} in which the following theorem is proved.
\begin{theorem}
\label{thm:Singer}
Let $\alpha$ be a primitive element in $\F_q^3$.
Let $\alpha^{i_0} , \alpha^{i_1} , ~ \ldots ~, \alpha^{i_q}$,
$0 \leq i_0 < i_1 < ~ \cdots < i_q \leq q^2 +q$,
be $q+1$ elements in $\F_q^3$, for
which any two are linearly independent and any three
are linearly dependent. Then for any $\ell$ there exists
exactly one pair $\alpha^{i_r} ,~\alpha^{i_s}$, $r \neq s$,
such that $i_r - i_s \equiv \ell~ (\text{mod}~q^2+q+1)$.
\end{theorem}

Let $Y$ be any two-dimensional
subspace of $\F_q^3$. Clearly, $Y$ contains $q+1$ elements
and can be regarded as a line in the projective plane of order $q$.
Therefore, by Theorem~\ref{thm:Singer} there exists a $j$,
$0 \leq j < \frac{q^3-1}{q-1}$ such that
$\alpha^j$, $\alpha^{j+1} \in Y$. If $X$ is the
one-dimensional subspace which contains $\alpha^j$ then
we choose $P = X, Y$, and the requirements of Theorem~\ref{thm:long_cycle}
are satisfied with $t=\sbinomq{2k+1}{k} \frac{q-1}{q^{2k+1}-1}=1$ and
$\ell=1$. Therefore, by Corollary~\ref{cor:long_cycle}, we have constructed
a Hamiltonian cycle in the middle levels of $\cP_q(3)$. In fact
by Theorem~\ref{thm:Singer}, for each
$\ell$, $1 \leq \ell \leq q^2+q$ there exists a $j$,
$0 \leq j < \frac{q^3-1}{q-1}$ such that
$\alpha^j$,~$\alpha^{j+\ell} \in Y$, which implies the existence of
a few Hamiltonian cycles in the middle levels of $\cP_q(3)$.

\section{Two- and three-dimensional subspaces of $\F_q^5$}
\label{sec:2_3subspaces}

In this section we will explore some interesting properties
of two-dimensional subspaces and three-dimensional subspaces
of $\F_q^5$. These properties will be used in the next section
to construct many different Hamiltonian cycles in the middle levels of $\cP_q(5)$.
We start by considering the two equivalence relations $E_2$ on $\cG_q(5,2)$ and
$E_3$ on $\cG_q(5,3)$ as was defined earlier in~(\ref{eq:equiv}).
Our construction of Hamiltonian cycles in the middle levels
of $\cP_q(5)$ is based on properties which connect $E_2$ and $E_3$.
These properties are formulated in the following theorem whose proof
requires a sequence of lemmas which follow.

\begin{theorem}
\label{thm:two_three_E}
Each three-dimensional subspace of $\F_q^5$ contains exactly
$q+1$ elements from one equivalence class of the relation $E_2$ and exactly one element
from each other equivalence class of $E_2$.
\end{theorem}

The consequences of Theorem~\ref{thm:two_three_E} in terms
of the bipartite middle levels subgraph of $\cP_q(5)$
will be used in our construction.
Let $v$ be a vertex, representing a three-dimensional subspace,
in the middle levels of $\cP_q(5)$. A three-dimensional subspace
contains $\sbinomq{3}{2} =q^2+q+1$ two-dimensional
subspaces. Therefore, the degree of $v$ is $q^2+q+1$,
$q+1$ edges to $q+1$ distinct vertices representing one equivalence
class of $E_2$, and $q^2$ edges to $q^2$ vertices representing
the other $q^2$ equivalence classes of $E_2$. These are the basic
properties used in our construction of a Hamiltonian cycle
in the middle levels of $\cP_q(5)$.

\vspace{0.2cm}

We start with an immediate consequence of Lemma~\ref{lem:full_cycle}
related to $E_2$ and $E_3$.
\begin{cor}
\label{cor:size_E}
Each equivalence class of $E_2$ and each equivalence class
of $E_3$ contains $s \deff \frac{q^5-1}{q-1} =q^4 + q^3 +q^2 +q+1$ elements.
\end{cor}
\begin{cor}
\label{cor:num_E}
The number of equivalence classes of $E_2$ is $q^2+1$ and this
is also the number of equivalence classes of $E_3$.
\end{cor}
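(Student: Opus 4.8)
The number of equivalence classes of $E_2$ is $q^2+1$, and this is also the number of equivalence classes of $E_3$.

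---The plan is to count by dividing the total number of subspaces by the common class size. Since $E_2$ is an equivalence relation, its classes partition $\cG_q(5,2)$, and by Lemma~\ref{lem:size_E} every class has exactly $s = q^4+q^3+q^2+q+1$ elements. Hence the number of equivalence classes of $E_2$ equals $|\cG_q(5,2)| / s = \sbinomq{5}{2} / s$. First I would invoke the factorization already recorded in the introduction, namely
$$
\sbinomq{5}{2} = \frac{(q^5-1)(q^4-1)}{(q^2-1)(q-1)} = (q^4+q^3+q^2+q+1)(q^2+1),
$$
so that dividing by $s = q^4+q^3+q^2+q+1$ leaves exactly $q^2+1$ classes.

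For $E_3$, I would reduce to the $E_2$ count using the standard reflection symmetry of the Gaussian coefficients, $\sbinomq{5}{3} = \sbinomq{5}{2}$ (equivalently, the duality $X \mapsto \dual{X}$ gives a bijection $\cG_q(5,3) \to \cG_q(5,2)$). Since by Lemma~\ref{lem:size_E} each $E_3$-class also has size $s$, the same division yields $|\cG_q(5,3)|/s = \sbinomq{5}{2}/s = q^2+1$ equivalence classes of $E_3$.

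There is no real obstacle here: the argument is a one-line counting step once Lemma~\ref{lem:size_E} (class size) and the factorization of $\sbinomq{5}{2}$ are in hand. The only point requiring a moment's care is confirming that $s$ divides $\sbinomq{5}{2}$ exactly — but this is immediate from the displayed factorization, and the quotient is visibly the integer $q^2+1$.
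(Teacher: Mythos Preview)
Your proposal is correct and essentially identical to the paper's own proof: both compute $\sbinomq{5}{2} = s(q^2+1)$ and divide by the common class size $s$ from Lemma~\ref{lem:size_E}. The only cosmetic difference is that for $E_3$ you invoke the symmetry $\sbinomq{5}{3}=\sbinomq{5}{2}$, whereas the paper simply writes ``the same computation holds for $E_3$''; these amount to the same thing.
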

\begin{proof}
The number of two-dimensional subspaces of $\F_q^5$
is $\frac{(q^5-1)(q^4-1)}{(q^2-1)(q-1)}= s (q^2+1)$.
Thus, by Corollary~\ref{cor:size_E}, the number of equivalence classes
of $E_2$ is $q^2+1$. The same computation holds for~$E_3$.
\end{proof}

\begin{lemma}
\label{lem:3_ind}
$\alpha^0$, $\alpha^i$, and $\alpha^{2i}$,
$1 \leq i \leq s-1$,
are linearly independent elements in $\F_{q^5}$.
\end{lemma}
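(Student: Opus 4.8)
The plan is to show that $\alpha^0,\alpha^i,\alpha^{2i}$ are linearly independent by a dimension-counting argument. Suppose for contradiction that these three elements are linearly dependent over $\F_q$. Since $\alpha$ is primitive in $\F_{q^5}$, none of the three elements is zero, and moreover $\alpha^0,\alpha^i$ are linearly independent whenever $i\not\equiv 0$, because a relation $c_1\alpha^0+c_2\alpha^i=0$ with $c_1,c_2\in\F_q$ would force $\alpha^i\in\F_q$, hence $\alpha^{i(q-1)}=1$, contradicting $1\le i\le s-1$ and the primitivity of $\alpha$ (note $i(q-1)<q^5-1$). Thus any dependence must express $\alpha^{2i}$ as a nonzero $\F_q$-linear combination of $\alpha^0$ and $\alpha^i$, say $\alpha^{2i}=a+b\alpha^i$ with $a,b\in\F_q$ and $a\ne 0$ (if $a=0$ then $\alpha^i\in\F_q$, again impossible).

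The key step is to iterate this relation. From $\alpha^{2i}=a+b\alpha^i$, multiplying by $\alpha^i$ gives $\alpha^{3i}=a\alpha^i+b\alpha^{2i}=a\alpha^i+b(a+b\alpha^i)=ab+(a+b^2)\alpha^i$, so $\alpha^{3i}$ also lies in the $\F_q$-span of $\{1,\alpha^i\}$. By induction, every power $\alpha^{mi}$ for $m\ge 0$ lies in the two-dimensional $\F_q$-subspace $W=\spn_{\F_q}\{1,\alpha^i\}$. But $\alpha^i$ generates a subfield $\F_q(\alpha^i)$ of $\F_{q^5}$, and since $[\F_{q^5}:\F_q]=5$ is prime, this subfield is either $\F_q$ or all of $\F_{q^5}$; the former is excluded as above, so $\F_q(\alpha^i)=\F_{q^5}$. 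Yet $\F_q(\alpha^i)$ is spanned over $\F_q$ by the powers $\alpha^{mi}$, all of which we have shown lie in the $2$-dimensional space $W$, forcing $\dim_{\F_q}\F_{q^5}\le 2$, contradicting $\dim_{\F_q}\F_{q^5}=5$.

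The main obstacle, and the only place requiring care, is verifying that $\alpha^i\notin\F_q$ for the full range $1\le i\le s-1$, i.e. that no such $i$ satisfies $\alpha^{i(q-1)}=1$: this holds precisely because the multiplicative order of $\alpha$ is $q^5-1$, and $1\le i(q-1)\le (s-1)(q-1)=q^5-1-(q-1)<q^5-1$, so $\alpha^{i(q-1)}\ne 1$. With this in hand, the inductive reduction to a subfield containment is routine. One may alternatively phrase the argument slightly more directly: the relation $\alpha^{2i}\in\spn_{\F_q}\{1,\alpha^i\}$ says exactly that the minimal polynomial of $\alpha^i$ over $\F_q$ has degree at most $2$, so $[\F_q(\alpha^i):\F_q]\in\{1,2\}$, which cannot divide $5$ unless it equals $1$, and degree $1$ is the already-excluded case $\alpha^i\in\F_q$. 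Either formulation completes the proof.
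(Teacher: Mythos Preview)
Your proof is correct. Both you and the paper begin with the same first move: assuming dependence forces every power $\alpha^{mi}$ into the two-dimensional $\F_q$-span of $\{1,\alpha^i\}$. From there, however, the arguments diverge. The paper proceeds combinatorially: since the $q+1$ one-dimensional subspaces $[\alpha^{0}],[\alpha^{i}],\ldots,[\alpha^{qi}]$ exhaust the lines of a $2$-dimensional space, one obtains $\alpha^{(q+1)i}\in\F_q$, hence $\alpha^{(q+1)i}=\alpha^{\ell s}$ for some $1\le \ell\le q-1$, and then a direct divisibility computation using $s\equiv 1\pmod{q+1}$ forces $(q+1)\mid \ell$, a contradiction. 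Your route is field-theoretic: the dependence says the minimal polynomial of $\alpha^i$ over $\F_q$ has degree at most $2$, so $[\F_q(\alpha^i):\F_q]\in\{1,2\}$, and since $5$ is prime only degree $1$ is possible, i.e.\ $\alpha^i\in\F_q$, which your order computation rules out. Your argument is shorter and makes transparent \emph{why} the lemma is really about $2\nmid 5$; the paper's version is more hands-on and stays entirely within the exponent arithmetic already set up for the rest of Section~\ref{sec:2_3subspaces}.
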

\begin{proof}
Assume the contrary, that $\alpha^0$, $\alpha^i$, and $\alpha^{2i}$,
are linearly dependent. This implies that $\alpha^0 , \alpha^i , \alpha^{2i} , \ldots , \alpha^{qi}$,
is a two-dimensional subspace and $\alpha^{(q+1)i}= \beta \alpha^0$ for
some $\beta \in \F_q$ and hence $\alpha^{(q+1)i} \in \F_q$.
Since $\alpha^0 , \alpha^s , \ldots , \alpha^{(q-2)s}$
are the elements of $\F_q$ in $\F_{q^5}$ it follows that
$\alpha^{(q+1)i}= \alpha^{\ell s}$, for some
$1 \leq \ell \leq q-1$. This implies that $q+1$ divides $\ell s$.
Since $s=\frac{q^5-1}{q-1} = q^4+q^3 + q^2 + q +1$
is follows that $\frac{\ell s}{q+1} = \ell (q^3 +q + \frac{1}{q+1})$
and hence $q+1$ divides $\ell$. But, since $1 \leq \ell \leq q-1$ this is impossible, a contradiction.
Thus, $\alpha^0$, $\alpha^i$, and $\alpha^{2i}$,
are linearly independent elements in $\F_{q^5}$.
\end{proof}

\begin{lemma}
\label{lem:num_pairs}
Each two-dimensional subspace $L$ has exactly $q^2+q$ distinct pairs
of the form $(x , y)$ such that
$L= \Span{ x , y  }$.
\end{lemma}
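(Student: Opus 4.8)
The plan is to reduce the statement to a direct count of one‑dimensional subspaces of $L$, using the notational convention set up earlier in the paper (under which $x$ and $y$ stand for one‑dimensional subspaces of $\F_q^5$, or equivalently nonzero vectors modulo scaling by $\F_q^*$). First I would observe that, for a pair $(x,y)$ of one‑dimensional subspaces contained in $L$, the equality $L=\Span{x,y}$ holds if and only if $x\neq y$: two distinct one‑dimensional subspaces span a subspace of dimension at least $2$, and since $\dim L=2$ this span is exactly $L$; conversely if $x=y$ then $\Span{x,y}=x$ is one‑dimensional, hence not equal to $L$. So counting the admissible pairs amounts to counting ordered pairs of \emph{distinct} one‑dimensional subspaces of $L$.

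Next I would count the one‑dimensional subspaces of $L$. Since $L$ is a two‑dimensional vector space over $\F_q$, it has $q^2-1$ nonzero vectors, and two nonzero vectors lie in the same one‑dimensional subspace exactly when they differ by a factor in $\F_q\setminus\{0\}$; hence $L$ contains $\frac{q^2-1}{q-1}=q+1$ distinct one‑dimensional subspaces. (This is the same Gaussian‑coefficient computation $\sbinomq{2}{1}=q+1$ used elsewhere in the paper.)

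Finally I would assemble the count: the number of ordered pairs $(x,y)$ of distinct elements chosen from a set of size $q+1$ is $(q+1)\cdot q=q^2+q$, which is exactly the claimed number. I do not anticipate a genuine obstacle here; the only point requiring a sentence of care is making explicit that "pair" means an ordered pair of \emph{one‑dimensional} subspaces (consistent with how such pairs are used in the preceding development), and that distinctness is both necessary and sufficient for them to span $L$. If instead unordered pairs were intended the count would be $\binom{q+1}{2}$, so stating the ordered convention is the substantive clarification the proof must make.
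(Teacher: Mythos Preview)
Your proposal is correct and follows essentially the same approach as the paper: both arguments count the $q+1$ one-dimensional subspaces of $L$ and then count ordered pairs of distinct ones to get $(q+1)q=q^2+q$. Your write-up simply adds the explicit justification that distinctness of $x$ and $y$ is equivalent to $\Span{x,y}=L$, which the paper leaves implicit.
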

\begin{proof}
Let $L$ be a two-dimensional subspace.
$L$ is spanned by any two of its one-dimensional subspaces.
$L$ contains exactly $q+1$ distinct one-dimensional
subspaces. Hence, the
number of distinct ordered pairs of elements from $L$ is $(q+1)q$.
Thus, $L$ has exactly $q^2+q$ distinct pairs
of the form $(x , y)$ such that
$L= \Span{ x , y  }$.
\end{proof}

\begin{lemma}
\label{lem:distinct_diff}
If $(\alpha^t , \alpha^{t+i})$, $(\alpha^\ell , \alpha^{\ell+j})$,
$0 \leq t < \ell <s$, are two
distinct pairs of one-dimensional subspaces in a two-dimensional subspace $L$ then
$i \not\equiv j~(\text{mod}~s)$.
\end{lemma}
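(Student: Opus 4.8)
The plan is to argue by contradiction, exploiting the fact that the equivalence class of $L$ under $E_2$ has full size $s = \frac{q^5-1}{q-1}$ (Lemma~\ref{lem:size_E}), so that the cyclic action of $\alpha$ on two-dimensional subspaces is \emph{free} on this orbit. Suppose $(\alpha^t,\alpha^{t+i})$ and $(\alpha^\ell,\alpha^{\ell+j})$ are two distinct ordered pairs of one-dimensional subspaces, both lying in $L$, with $i \equiv j \pmod s$. Since the pairs are distinct and $0 \le t < \ell < s$, we have $\ell - t \not\equiv 0 \pmod s$. The idea is to apply the cyclic shift by $\ell - t$ positions, i.e.\ multiply everything by $\alpha^{\ell-t}$: the pair $(\alpha^t,\alpha^{t+i})$ is carried to $(\alpha^\ell,\alpha^{\ell+i}) = (\alpha^\ell,\alpha^{\ell+j})$ (using $i \equiv j \pmod s$, which means the one-dimensional subspace $\alpha^{\ell+i}$ equals $\alpha^{\ell+j}$). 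Both of these pairs span subspaces contained in $L$ and in $\alpha^{\ell-t}L$ respectively; more precisely, $\Span{\alpha^t,\alpha^{t+i}} = L$ forces $\Span{\alpha^\ell,\alpha^{\ell+j}} = \alpha^{\ell - t}L$, while by hypothesis $\Span{\alpha^\ell,\alpha^{\ell+j}} = L$ as well. Hence $\alpha^{\ell-t}L = L$.

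First I would make the step ``$\Span{\alpha^t,\alpha^{t+i}}=L$'' rigorous: both pairs are pairs of one-dimensional subspaces of the two-dimensional space $L$, and any two distinct one-dimensional subspaces of a two-dimensional space span it; I should check the pair elements are genuinely distinct as one-dimensional subspaces, which is part of what ``pair of one-dimensional subspaces in $L$'' is taken to mean (a pair with distinct entries, as in Lemma~\ref{lem:num_pairs}). Then the equality $\alpha^{\ell - t}L = L$ with $0 < \ell - t < s$ contradicts Lemma~\ref{lem:size_E}: the smallest positive $m$ with $\alpha^m L = L$ is exactly $s$, since the $E_2$-class of $L$ has $s$ elements and is a single cyclic orbit under multiplication by $\alpha$.

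The main obstacle — really the only subtle point — is being careful about the reduction of exponents modulo $s$ versus modulo $q^5-1$. We are identifying $\alpha^a$ with the one-dimensional subspace it spans, and $\alpha^a = \alpha^b$ as one-dimensional subspaces precisely when $a \equiv b \pmod s$ (because $\alpha^s$ generates $\F_q^\times \subset \F_{q^5}^\times$, cf.\ the argument in Lemma~\ref{lem:3_ind}). So the hypothesis $i \equiv j \pmod s$ is exactly what is needed to conclude $\alpha^{t+i}$ and $\alpha^{t+j}$ are the same one-dimensional subspace, and likewise after shifting; I would state this identification explicitly at the start of the proof and then the rest is immediate. No lengthy computation is involved.
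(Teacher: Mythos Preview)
Your proposal is correct and follows essentially the same route as the paper: assume $i\equiv j\pmod{s}$, deduce $L=\Span{\alpha^t,\alpha^{t+i}}=\Span{\alpha^\ell,\alpha^{\ell+i}}=\alpha^{\ell-t}L$, and obtain a contradiction from the fact that the $E_2$-orbit of $L$ has full size $s$. The only cosmetic difference is that the paper phrases the final contradiction as ``$\gcd(q+1,s)>1$'' (re-invoking the argument behind Lemma~\ref{lem:full_cycle}) whereas you cite Lemma~\ref{lem:size_E} directly; your formulation is arguably cleaner, and your explicit remark about exponents modulo $s$ versus $q^5-1$ is a welcome clarification.
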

\begin{proof}
Assume the contrary, that $(\alpha^t , \alpha^{t+i})$, $(\alpha^\ell , \alpha^{\ell+j})$
are two distinct pairs of one-dimensional subspaces in a two-dimensional
subspace $L$ and $i \equiv j~(\text{mod}~s)$.
Since $L = \Span{\alpha^t , \alpha^{t+i}} = \Span {\alpha^\ell , \alpha^{\ell+i}}$
it follows that $L = \alpha^{\ell-t} L$. Since the number of one-dimensional
subspaces in a two-dimensional subspace is
$q+1$, it follows by Lemma~\ref{lem:full_cycle} that
$\gcd (q+1 , s) >1$, a contradiction.
\end{proof}

The set $\cD \deff \{ i ~:~ \Span{\alpha^0 , \alpha^i} \in \cG_2(5,2) \}$
contains $s-1=q^4+q^3+q^2+q$ elements. By Lemmas~\ref{lem:num_pairs}
and~\ref{lem:distinct_diff} it follows that each equivalence class
of $E_2$ can be represented by $q^2+q$ distinct elements from $\cD$
and the possible representatives of each equivalent class are
disjoint. Therefore, an equivalence class of $E_2$ will be denoted by
$[\Span{\alpha^0 , \alpha^i}]$, where each subspace of the equivalence class has
a distinct pair of elements of the form $(\alpha^t , \alpha^{t+i})$.

\begin{lemma}
\label{lem:triplet_exist}
If $\Span{ \alpha^t , \alpha^{t+i} }$ and $\Span{ \alpha^\ell , \alpha^{\ell+i} }$,
$1 \leq i \leq s-1$, are two distinct
two-dimensional subspaces in a three-dimensional subspace $Z$ of $\F_q^5$ then
$Z = \Span { \alpha^r , \alpha^{r+i} , \alpha^{r+2i} }$ for some $r$,
$0 \leq r \leq s-1$.
\end{lemma}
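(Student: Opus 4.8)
The plan is to show that the two given two-dimensional subspaces force a very rigid structure on $Z$, essentially because both of them are shifted copies (by the same shift $i$) inside a common three-dimensional space. First I would fix notation: write $U = \Span{\alpha^t,\alpha^{t+i}}$ and $V=\Span{\alpha^\ell,\alpha^{\ell+i}}$, both contained in $Z$. Since $U\neq V$ and both are $2$-dimensional inside the $3$-dimensional $Z$, we have $Z = U + V$ and $\dim(U\cap V)=1$. The key observation is that $\alpha^{-(\ell-t)} V = \Span{\alpha^t,\alpha^{t+i}} = U$ only if the classes of $U$ and $V$ under $E_2$ coincide in a way that Lemma~\ref{lem:distinct_diff} already constrains; more usefully, I would work additively with exponents and use that $\alpha^{t+i}$ spans $U\cap V$ or not, splitting into cases on which one-dimensional subspace is the intersection.

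The cleaner route I would actually pursue: apply the map $x\mapsto \alpha^i x$ (multiplication by $\alpha^i$) to the whole picture. This map is $\F_q$-linear on $\F_{q^5}$, so it sends three-dimensional subspaces to three-dimensional subspaces and two-dimensional to two-dimensional, and it sends $\Span{\alpha^t,\alpha^{t+i}}$ to $\Span{\alpha^{t+i},\alpha^{t+2i}}$. So the hypothesis "$\Span{\alpha^t,\alpha^{t+i}}\subset Z$" is not obviously preserved, but if I can show $\alpha^{t+2i}\in Z$ then iterating gives $\alpha^{t+ri}\in Z$ for all $r$, and since these one-dimensional subspaces run through an $E_2$-class which has size $s$ (Lemma~\ref{lem:size_E}) and $Z$ has only $\frac{q^3-1}{q-1}$ one-dimensional subspaces, that would be a contradiction unless the relevant exponents collapse — so instead the right conclusion is the finite statement in the lemma. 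Concretely: from $\alpha^t,\alpha^{t+i},\alpha^\ell,\alpha^{\ell+i}\in Z$ and $\dim Z=3$, there is a nontrivial $\F_q$-linear dependence; I would use it together with the $E_2$-class structure to pin down that, after relabeling, $Z$ already contains three elements in arithmetic progression $\alpha^r,\alpha^{r+i},\alpha^{r+2i}$, and then invoke Lemma~\ref{lem:3_ind} (with the exponent $i$ in the allowed range $1\le i\le s-1$) to conclude these three are linearly independent, hence span $Z$.

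So the main steps, in order, are: (1) establish $Z=U+V$, $\dim(U\cap V)=1$; (2) show the common intersection line, combined with the two shifted pairs, yields three one-dimensional subspaces of the form $\alpha^r,\alpha^{r+i},\alpha^{r+2i}$ inside $Z$ — this is the combinatorial heart, handled by checking the few cases for which element of $U$ and which element of $V$ coincides (up to $\F_q$-scaling) in $U\cap V$, using Lemma~\ref{lem:distinct_diff} to rule out degenerate coincidences; (3) apply Lemma~\ref{lem:3_ind} to get that $\alpha^r,\alpha^{r+i},\alpha^{r+2i}$ are $\F_q$-linearly independent; (4) conclude $Z=\Span{\alpha^r,\alpha^{r+i},\alpha^{r+2i}}$ since a $3$-dimensional space containing three independent vectors equals their span. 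The step I expect to be the real obstacle is step (2): making the case analysis on $U\cap V$ clean, and in particular ruling out the possibility that the intersection line is "misaligned" so that no arithmetic-progression triple appears — here I would lean on the fact (Lemma~\ref{lem:distinct_diff}) that within a fixed $2$-dimensional subspace the "difference" $i$ determines the pair up to the overall shift, which should force the alignment.
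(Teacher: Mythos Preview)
Your overall architecture---locate an arithmetic-progression triple $\alpha^r,\alpha^{r+i},\alpha^{r+2i}$ inside $Z$ and then invoke Lemma~\ref{lem:3_ind}---is exactly what the paper does. The gap is precisely where you flagged it, step~(2), and the problem is structural rather than a matter of cleaning up cases.

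Your plan is to analyse $U\cap V$ with $U=\Span{\alpha^t,\alpha^{t+i}}$ and $V=\Span{\alpha^\ell,\alpha^{\ell+i}}$. Observe that $V=\alpha^{\ell-t}U$, so the $q+1$ lines of $V$ are exactly the $\alpha^{\ell-t}$-shifts of the lines of $U$. If $U\cap V$ equals the line $\langle\alpha^{u_j}\rangle=\langle\alpha^{u_k+(\ell-t)}\rangle$, then $Z$ contains $\alpha^{u_k},\alpha^{u_k+(\ell-t)},\alpha^{u_k+2(\ell-t)}$: an arithmetic progression, but with common difference $\ell-t$, \emph{not} $i$. So the route through $U\cap V$ proves a different statement than the lemma claims. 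Lemma~\ref{lem:distinct_diff} only controls differences within a single $2$-space and cannot repair this; and the ``few cases'' you anticipate are really $q+1$ lines on each side, most of them not among the four named generators.

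The paper's trick is to swap which pair of $2$-subspaces you intersect. Set $W=\Span{\alpha^t,\alpha^\ell}$ and note that $\alpha^i W=\Span{\alpha^{t+i},\alpha^{\ell+i}}$; both lie in $Z$ because all four generators do. The crucial linearity is that for every $\beta\in\F_q$, if $\alpha^t+\beta\alpha^\ell=\alpha^m$ then $\alpha^{t+i}+\beta\alpha^{\ell+i}=\alpha^{m+i}$, so the lines of $\alpha^i W$ are exactly the $\alpha^i$-shifts of the lines of $W$. Now $W\neq\alpha^iW$ (else the orbit under $E_2$ would be short), so their intersection in $Z$ is a single line, giving $b_j\equiv b_k+i\pmod s$; hence $\alpha^{b_k},\alpha^{b_k+i},\alpha^{b_k+2i}\in Z$ and Lemma~\ref{lem:3_ind} finishes. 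You actually brushed against this idea when you considered the map $x\mapsto\alpha^i x$, but you applied it to $U$ (where it leaves $Z$) rather than to the transverse space $W$ (where it stays inside $Z$).
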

\begin{proof}
For any given $\beta \in \F_q \setminus \{ 0 \}$, we have
$\alpha^t + \beta \alpha^\ell = \alpha^m$ and
$\alpha^{t+i} + \beta \alpha^{\ell+i} = \alpha^{m+i}$,
for some~$m$, $0 \leq m \leq s-1$,
$m \not\in \{ t , \ell \}$. This implies that in $Z$ there
are two two-dimensional subspaces of the form
$$
\alpha^{b_0} , \alpha^{b_1} , \ldots , \alpha^{b_q}
$$
$$
\alpha^{b_0+i} , \alpha^{b_1+i} , \ldots , \alpha^{b_q+i}~.
$$
Any two distinct two-dimensional subspaces in a three-dimensional
subspace  intersect in exactly one one-dimensional subspace.
Therefore, w.l.o.g. we can assume that $b_1 = b_0 +i$.
Hence, $Z$ contains the points $\alpha^{b_0}$, $\alpha^{b_1}=\alpha^{b_0+i}$,
$\alpha^{b_1 + i}= \alpha^{b_0 +2i}$. By Lemma~\ref{lem:3_ind}, these
three points are linearly independent and thus
$Z = \Span { \alpha^b_0 , \alpha^{b_0+i} , \alpha^{b_0+2i} }$.
\end{proof}

\begin{lemma}
\label{lem:represent_3}
Each three-dimensional subspace of $\F_q^5$ can be represented
as $\Span { \alpha^r , \alpha^{r+i} , \alpha^{r+2i} }$ for some $r$ and $i$ such that
$0 \leq r \leq s-1$, $1 \leq i \leq s-1$.
\end{lemma}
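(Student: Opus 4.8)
The plan is to reduce the statement to Lemma~\ref{lem:triplet_exist} by a pigeonhole count on the two-dimensional subspaces inside a given three-dimensional subspace. Let $Z$ be an arbitrary three-dimensional subspace of $\F_q^5$. The number of two-dimensional subspaces contained in $Z$ equals $\sbinomq{3}{2} = \frac{q^3-1}{q-1} = q^2+q+1$ (equivalently, the number of hyperplanes of a three-dimensional space). On the other hand, by Corollary~\ref{cor:num_E} there are only $q^2+1$ equivalence classes of $E_2$ among \emph{all} two-dimensional subspaces of $\F_q^5$. Since $q^2+q+1 > q^2+1$ for every prime power $q$, two distinct two-dimensional subspaces $L_1, L_2 \subseteq Z$ must lie in the same equivalence class of $E_2$.

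Next I would use the description of $E_2$-classes recorded just after Lemma~\ref{lem:distinct_diff}: every equivalence class of $E_2$ can be written as $[\Span{\alpha^0,\alpha^i}]$ with $1 \le i \le s-1$, and every subspace in this class contains a pair of one-dimensional subspaces of the form $(\alpha^t,\alpha^{t+i})$ (indeed the class is $\{\alpha^j\Span{\alpha^0,\alpha^i} : j\} = \{\Span{\alpha^j,\alpha^{j+i}} : j\}$). Applying this to the common class of $L_1$ and $L_2$, we obtain an index $i$ with $1 \le i \le s-1$ together with exponents $t,\ell$ such that $L_1 = \Span{\alpha^t,\alpha^{t+i}}$ and $L_2 = \Span{\alpha^\ell,\alpha^{\ell+i}}$ — the same $i$ for both.

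Finally, $L_1$ and $L_2$ are two distinct two-dimensional subspaces of $Z$ that have exactly the shape required in the hypothesis of Lemma~\ref{lem:triplet_exist}, so that lemma gives $Z = \Span{\alpha^r,\alpha^{r+i},\alpha^{r+2i}}$ for some $r$ with $0 \le r \le s-1$, which is precisely the claimed representation. There is essentially no obstacle in this argument; the only point requiring a little care is to notice that the two subspaces produced by pigeonhole automatically share a common ``difference'' $i$ in the sense needed by Lemma~\ref{lem:triplet_exist}, and this is exactly what the notation $[\Span{\alpha^0,\alpha^i}]$ encodes. Everything else is a one-line Gaussian-coefficient count.
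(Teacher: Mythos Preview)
Your argument is correct and reaches the same endpoint as the paper (an application of Lemma~\ref{lem:triplet_exist}), but the pigeonhole step is organized differently. The paper counts ordered pairs of one-dimensional subspaces inside $Z$: there are $(q^2+q+1)(q^2+q)=q^4+2q^3+2q^2+q$ such pairs, while the set of possible ``differences'' $i$ with $0<i<s$ has only $s-1=q^4+q^3+q^2+q$ elements, forcing two pairs $(\alpha^t,\alpha^{t+i})$ and $(\alpha^\ell,\alpha^{\ell+i})$ with a common $i$. You instead pigeonhole the $q^2+q+1$ two-dimensional subspaces of $Z$ into the $q^2+1$ classes of $E_2$ supplied by Corollary~\ref{cor:num_E}, and then read off the common $i$ from the class label $[\Span{\alpha^0,\alpha^i}]$. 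Your version is a bit cleaner and leverages structure already established (Corollary~\ref{cor:num_E} and the labeling after Lemma~\ref{lem:distinct_diff}); the paper's version is more self-contained, needing only the value of $s$ and not the enumeration of $E_2$-classes. Either way the reduction to Lemma~\ref{lem:triplet_exist} is immediate.
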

\begin{proof}
A three-dimensional subspace $Z$ contains
$q^2+q+1$ distinct one-dimensional subspaces.
Therefore, there are $(q^2 +q+1)(q^2 +q) = q^4 + 2q^3 + 2q^2 +q$
ordered pairs of one-dimensional
subspaces of the form $(x , y)$, $x,y \in Z$, $x \neq y$.
$s=\frac{q^5-1}{q-1}=q^4+q^3+q^2+q+1$ and hence the set
$\{ i ~:~ (\alpha^t , \alpha^{t+i}) \in Z \times Z, ~ 0 <i < s \}$ contains at most
$q^4+q^3+q^2+q$ distinct integers. Hence, there exists at least two distinct
pairs of the form $( \alpha^t , \alpha^{t+i} )$, $( \alpha^\ell , \alpha^{\ell+i} )$,
$\alpha^t , \alpha^{t+i}, \alpha^\ell , \alpha^{\ell+i} \in Z$.
Hence, $Z$ contains two distinct two-dimensional subspaces of
the form $\Span{ \alpha^t , \alpha^{t+i} }$ and $\Span{ \alpha^\ell , \alpha^{\ell+i} }$.
Thus, by Lemma~\ref{lem:triplet_exist} the claim follows.
\end{proof}

\begin{lemma}
\label{lem:q+1}
If $Z$ is a three-dimensional subspace of $\F_q^5$ represented
as $\Span { \alpha^r , \alpha^{r+i} , \alpha^{r+2i} }$ for some~$r$ and $i$ such that
$0 \leq r \leq s-1$, $1 \leq i \leq s-1$, then it contains at least $q+1$ two-dimensional subspaces
from the equivalence class $[ \Span { \alpha^0 , \alpha^i } ]$ of $E_2$.
\end{lemma}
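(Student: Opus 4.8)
The plan is to reduce the claim to a counting statement about one-dimensional subspaces. Recall from the discussion following Lemma~\ref{lem:distinct_diff} that a two-dimensional subspace $L$ lies in the equivalence class $[\Span{\alpha^0,\alpha^i}]$ of $E_2$ exactly when $L$ contains a pair of one-dimensional subspaces of the form $(\alpha^t,\alpha^{t+i})$, and that by Lemma~\ref{lem:distinct_diff} such a pair, when it exists, is the only one in $L$ with ``difference'' $i$. Hence it suffices to produce $q+1$ one-dimensional subspaces $u\subseteq Z$ for which $\alpha^i u$ is again a one-dimensional subspace of $Z$: then each $\Span{u,\alpha^i u}$ is a two-dimensional subspace of $Z$ (it is two-dimensional because $\alpha^i u\neq u$ whenever $1\le i\le s-1$, since the nonzero elements of $\F_q$ inside $\F_{q^5}$ are precisely the powers $\alpha^{cs}$ with $0\le c\le q-2$, so no such $\alpha^i$ lies in $\F_q$), it contains the pair $(u,\alpha^i u)$ of difference $i$, and therefore it belongs to $[\Span{\alpha^0,\alpha^i}]$.

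First I would take the two-dimensional subspace $M\deff\Span{\alpha^r,\alpha^{r+i}}$. By the same remark about the nonzero elements of $\F_q$, $\alpha^r$ and $\alpha^{r+i}$ are linearly independent, so $M$ is genuinely two-dimensional, and $M\subseteq Z=\Span{\alpha^r,\alpha^{r+i},\alpha^{r+2i}}$. Let $u_0,u_1,\dots,u_q$ be the $q+1$ distinct one-dimensional subspaces of $M$. For each $j$ we have $\alpha^i u_j\subseteq\alpha^i M=\Span{\alpha^{r+i},\alpha^{r+2i}}\subseteq Z$, so $u_j$ and $\alpha^i u_j$ are one-dimensional subspaces of $Z$; by the previous paragraph $L_j\deff\Span{u_j,\alpha^i u_j}$ is then a two-dimensional subspace of $Z$ lying in $[\Span{\alpha^0,\alpha^i}]$.

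Finally I would verify that $L_0,L_1,\dots,L_q$ are pairwise distinct. If $L_j=L_{j'}$ with $j\neq j'$, write $u_j=\Span{\alpha^{t_j}}$ and $u_{j'}=\Span{\alpha^{t_{j'}}}$; since $u_j\neq u_{j'}$ we have $t_j\not\equiv t_{j'}\pmod s$, so $(\alpha^{t_j},\alpha^{t_j+i})$ and $(\alpha^{t_{j'}},\alpha^{t_{j'}+i})$ are two distinct pairs of one-dimensional subspaces lying in the common two-dimensional subspace $L_j$, both of difference $i$, which contradicts Lemma~\ref{lem:distinct_diff}. Thus $Z$ contains at least the $q+1$ distinct subspaces $L_0,\dots,L_q$ from $[\Span{\alpha^0,\alpha^i}]$. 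The only step that takes any care is this last distinctness argument --- it is precisely where Lemma~\ref{lem:distinct_diff} is used --- while everything else is routine bookkeeping with the representation $Z=\Span{\alpha^r,\alpha^{r+i},\alpha^{r+2i}}$ and the earlier lemmas. (If one wanted the exact count, one would further note that $\alpha^i Z\neq Z$ --- which follows from Corollary~\ref{cor:gcd} and Lemma~\ref{lem:full_cycle}, as these give that the $E_3$-class of $Z$ has $s$ elements --- so that $Z\cap\alpha^{-i}Z$ has dimension exactly $2$ and equals $M$; this shows the $L_j$ are \emph{all} of the relevant subspaces, but only the lower bound $q+1$ is needed here.)
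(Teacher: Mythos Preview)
Your argument is correct and follows essentially the same route as the paper: both proofs run through the $q+1$ one-dimensional subspaces of $M=\Span{\alpha^r,\alpha^{r+i}}$ and observe that multiplying each by $\alpha^i$ lands in $\alpha^i M=\Span{\alpha^{r+i},\alpha^{r+2i}}\subseteq Z$, yielding $q+1$ members of $[\Span{\alpha^0,\alpha^i}]$ inside $Z$. The paper writes these points out explicitly as $\alpha^r$, $\alpha^{r+i}$, and $\alpha^r+\beta\alpha^{r+i}=\alpha^{m_\beta}$ for $\beta\in\F_q\setminus\{0\}$, whereas you phrase it abstractly; your use of Lemma~\ref{lem:distinct_diff} to confirm the $L_j$ are pairwise distinct makes explicit a step the paper leaves to the reader.
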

\begin{proof}
For any given $\beta \in \F_q \setminus \{ 0 \}$, we have
$\alpha^r + \beta \alpha^{r+i} = \alpha^{m_\beta}$ and
$\alpha^{r+i} + \beta \alpha^{r+2i} = \alpha^{m_\beta+i}$,
for some $m_\beta$, $0 \leq m_\beta \leq s-1$. Therefore,
$\Span { \alpha^r , \alpha^{r+i} , \alpha^{r+2i} }$ contains
the $q-1$ two-dimensional subspaces $\Span{ \alpha^{m_\beta},\alpha^{m_\beta+i}}$,
$\beta \in \F_q \setminus \{ 0 \}$, and the two two-dimensional subspaces
$\Span { \alpha^r , \alpha^{r+i} }$ and $\Span {\alpha^{r+i} , \alpha^{r+2i} }$.
These $q+1$ two-dimensional subspaces are
from the equivalence class $[ \Span { \alpha^0 , \alpha^i } ]$ of $E_2$.
\end{proof}

\begin{lemma}
\label{lem:Ex_q+1}
For a three-dimensional subspace $Z$ of $\F_q^5$
there exists exactly one $i$, $1 \leq i \leq s-1$,
such that $Z$
contains exactly $q+1$ two-dimensional subspaces
from the equivalence class $[ \Span { \alpha^0 , \alpha^i } ]$ of $E_2$.
\end{lemma}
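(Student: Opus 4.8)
The plan is to recast ``number of two-dimensional subspaces of $Z$ in a given $E_2$-class'' as a clean intersection count, prove it equals $q+1$ for the class coming from the representation of $Z$, and then get uniqueness from a global double count. Throughout, recall that every equivalence class of $E_2$ has the form $[\Span{\alpha^0,\alpha^i}]$ with $1\le i\le s-1$, and that $\Span{\alpha^0,\alpha^i,\alpha^{2i}}$ is genuinely three-dimensional by Lemma~\ref{lem:3_ind}. The first step is the reformulation: for a three-dimensional subspace $Z$ and a class $\cC=[\Span{\alpha^0,\alpha^i}]$, the number of two-dimensional subspaces of $Z$ lying in $\cC$ equals the number of one-dimensional subspaces $U\subseteq Z$ with $\alpha^iU\subseteq Z$. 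Indeed, $U\mapsto\Span{U,\alpha^iU}$ sends such a $U$ to a two-dimensional subspace of $Z$ (two-dimensional since $\alpha^iU\neq U$ as $i\not\equiv0\pmod s$) belonging to $\cC$; it is surjective onto the two-dimensional subspaces of $Z$ in $\cC$, since any such subspace can be written $\Span{\alpha^t,\alpha^{t+i}}$ with $\Span{\alpha^t}\subseteq Z$ and $\Span{\alpha^{t+i}}\subseteq Z$; and it is injective because a two-dimensional subspace in $\cC$ contains only one ordered pair of one-dimensional subspaces of difference $i$, by Lemma~\ref{lem:distinct_diff}.

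Next I would pin the count to exactly $q+1$. Given $Z$, Lemma~\ref{lem:represent_3} lets me write $Z=\Span{\alpha^r,\alpha^{r+i},\alpha^{r+2i}}$, and Lemma~\ref{lem:q+1} gives at least $q+1$ two-dimensional subspaces of $Z$ in $\cC_0:=[\Span{\alpha^0,\alpha^i}]$. For the matching upper bound, use the reformulation: that number equals the number of one-dimensional subspaces $U$ with $U\subseteq Z$ and $U\subseteq\alpha^{-i}Z$ (multiplication by $\alpha^{-i}$ being an $\F_q$-linear bijection of $\F_{q^5}$), i.e.\ the number of one-dimensional subspaces of $Z\cap\alpha^{-i}Z$. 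Since $1\le i\le s-1$ and the $E_3$-class of $Z$ has $s$ elements by Lemma~\ref{lem:size_E}, we have $\alpha^{-i}Z\neq Z$, so $Z\cap\alpha^{-i}Z$ has dimension at most $2$ and thus at most $q+1$ one-dimensional subspaces. Hence $Z$ contains exactly $q+1$ two-dimensional subspaces from $\cC_0$.

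For uniqueness I would argue globally. Fix an $E_2$-class $\cC=[\Span{\alpha^0,\alpha^i}]$. If a three-dimensional $Z$ contains two distinct two-dimensional subspaces from $\cC$, they have the form $\Span{\alpha^t,\alpha^{t+i}}$ and $\Span{\alpha^\ell,\alpha^{\ell+i}}$, so by Lemma~\ref{lem:triplet_exist} $Z$ lies in the $E_3$-class $[\Span{\alpha^0,\alpha^i,\alpha^{2i}}]$; conversely, every $Z$ in that $E_3$-class contains at least $q+1\ge2$ two-dimensional subspaces from $\cC$ by Lemma~\ref{lem:q+1}. So the set of $Z$ containing at least two two-dimensional subspaces from $\cC$ is exactly one $E_3$-class, which has $s$ elements (Lemma~\ref{lem:size_E}). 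Summing over the $q^2+1$ classes of $E_2$ (Corollary~\ref{cor:num_E}), there are exactly $s(q^2+1)$ pairs $(Z,\cC)$ with $Z$ containing at least two two-dimensional subspaces from $\cC$. But there are exactly $s(q^2+1)$ three-dimensional subspaces of $\F_q^5$ (Corollary~\ref{cor:num_E} with Lemma~\ref{lem:size_E}), and by the previous paragraph each of them contributes at least one such pair; hence each contributes exactly one. Thus for every $Z$ there is a unique class $\cC_0$ with more than one two-dimensional subspace of $Z$ in it, that number is exactly $q+1$, and every other class contributes at most one.

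I expect the delicate points to be: (i) keeping the bookkeeping so that Lemmas~\ref{lem:distinct_diff} and~\ref{lem:triplet_exist} are invoked with the \emph{same} difference $i$ throughout, and (ii) checking that the two totals in the double count, namely the per-class value $s$ and the number $s(q^2+1)$ of three-dimensional subspaces, coincide exactly, which is precisely what upgrades ``at least one'' to ``exactly one''.
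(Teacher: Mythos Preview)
Your proof is correct and takes a genuinely different route from the paper. The paper argues purely at the level of equivalence classes: Lemmas~\ref{lem:represent_3} and~\ref{lem:q+1} give a map from $E_3$-classes to $E_2$-classes (each $E_3$-class to an $E_2$-class from which its members contain at least $q+1$ two-dimensional subspaces), Lemma~\ref{lem:triplet_exist} shows no $E_2$-class is hit twice, and since both sides have size $q^2+1$ the map is a bijection, giving uniqueness. Notice the paper's proof of this lemma never actually establishes that the count is \emph{exactly} $q+1$; that is deferred to Lemma~\ref{lem:two_three_E}, where the total $q^2+q+1=(q+1)+q^2\cdot 1$ forces it.

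Your argument adds the reformulation ``two-dimensional subspaces of $Z$ in $[\Span{\alpha^0,\alpha^i}]$ $\leftrightarrow$ one-dimensional subspaces of $Z\cap\alpha^{-i}Z$'', which immediately caps the count at $q+1$ because $\alpha^{-i}Z\neq Z$; combined with Lemma~\ref{lem:q+1} this pins the count to exactly $q+1$ already here. Your uniqueness step is the same pigeonhole in different clothing: you count pairs $(Z,\cC)$ rather than pairs of classes, but dividing your double count by $s$ recovers exactly the paper's bijection. What your approach buys is the exact value $q+1$ up front and a clean geometric picture; what the paper's approach buys is brevity, since it avoids setting up the intersection reformulation and postpones the exact count to where it is needed anyway.
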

\begin{proof}
By Lemma~\ref{lem:represent_3},
each three-dimensional subspace of $\F_q^5$ can be represented
as $\Span { \alpha^r , \alpha^{r+i} , \alpha^{r+2i} }$ for some $r$ and $i$ such that
$0 \leq r \leq s-1$, $1 \leq i \leq s-1$.
Hence, by Lemma~\ref{lem:q+1}, it contains
at least~$q+1$ two-dimensional subspaces
from the equivalence class $[ \Span { \alpha^0 , \alpha^i } ]$ of $E_2$.
This equivalence class of $E_2$ is the same for all the three-dimensional
subspaces which are contained in the same equivalence class of $E_3$.

Let $Z_1$ and $Z_2$ be two three-dimensional subspaces which
contain $q+1$ two-dimensional subspaces from the equivalence class
$[ \Span { \alpha^0 , \alpha^i } ]$ of $E_2$. Then by Lemma~\ref{lem:triplet_exist},
$Z_j = \Span { \alpha^{r_j} , \alpha^{{r_j}+i} , \alpha^{{r_j}+2i} }$ for some $r_j$,
$0 \leq r_j \leq s-1$, $j=1,2$. Hence, $Z_1$ and $Z_2$
are contained in the same equivalence class of $E_3$.
By Corollary~\ref{cor:num_E}, the number of equivalence classes of $E_3$
is $q^2+1$ and the number of equivalence classes of $E_2$ is $q^2+1$.
Therefore, by the above arguments and the pigeonhole principle,
if there exists a three-dimensional subspace of $\F_q^5$ which
contains $q+1$ two-dimensional
subspaces from two different equivalences classes of $E_2$ then there exists
a three-dimensional subspace of $\F_q^5$ which does not contain any
$q+1$ two-dimensional subspaces which are contained in one equivalence class of $E_2$.
This is a contradiction to Lemmas~\ref{lem:represent_3} and~\ref{lem:q+1}.
\end{proof}

In the sequel, an equivalence class of $E_3$ will be
represented by $[\Span { \alpha^0 , \alpha^i , \alpha^{2i} }]$, if each of its
three-dimensional subspaces have a subspaces of the form
$\Span { \alpha^r , \alpha^{r+i} , \alpha^{r+2i} }$.
By Lemma~\ref{lem:Ex_q+1} there is no ambiguity in using this
representation and this representation is unique.

\begin{lemma}
\label{lem:Ex_one}
A three-dimensional subspace in the equivalence class
$[\Span { \alpha^0 , \alpha^i , \alpha^{2i} }]$
contains at most one two-dimensional subspace from
each equivalence class of $E_2$ different from $[\Span { \alpha^0 , \alpha^i }]$.
\end{lemma}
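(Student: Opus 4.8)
The plan is to argue by a counting argument that combines the structure established in Lemmas~\ref{lem:Ex_q+1} and~\ref{lem:size_E} with the known numbers of two-dimensional subspaces inside a three-dimensional subspace of $\F_q^5$. Let $Z$ be a three-dimensional subspace in the equivalence class $[\Span{\alpha^0,\alpha^i,\alpha^{2i}}]$. By Lemma~\ref{lem:Ex_q+1}, $Z$ contains exactly $q+1$ two-dimensional subspaces from the class $[\Span{\alpha^0,\alpha^i}]$ and no $q+1$ of its two-dimensional subspaces lie in any other single class of $E_2$. On the other hand, $Z$ contains exactly $q^2+q+1$ two-dimensional subspaces in total (the hyperplanes of the projective plane $Z$). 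Subtracting the $q+1$ subspaces from the distinguished class leaves $q^2+q+1-(q+1)=q^2$ two-dimensional subspaces of $Z$ to be distributed among the remaining $q^2+1-1=q^2$ equivalence classes of $E_2$ (using Corollary~\ref{cor:num_E}).

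The heart of the argument is then to show that no two of these $q^2$ remaining subspaces fall in the same class, so that they are distributed one per class. First I would suppose, for contradiction, that some class $[\Span{\alpha^0,\alpha^j}]$ with $j\not\equiv i$ contributes at least two distinct two-dimensional subspaces $\Span{\alpha^t,\alpha^{t+j}}$ and $\Span{\alpha^\ell,\alpha^{\ell+j}}$ to $Z$. Then Lemma~\ref{lem:triplet_exist} applies with the difference $j$ in place of $i$, forcing $Z=\Span{\alpha^r,\alpha^{r+j},\alpha^{r+2j}}$ for some $r$; that is, $Z$ lies in the class $[\Span{\alpha^0,\alpha^j,\alpha^{2j}}]$ of $E_3$ as well. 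But by Lemma~\ref{lem:Ex_q+1} the class of $E_3$ containing $Z$ is unique and determines the unique class of $E_2$ from which $Z$ draws $q+1$ two-dimensional subspaces, namely $[\Span{\alpha^0,\alpha^i}]$. Hence $[\Span{\alpha^0,\alpha^j}]=[\Span{\alpha^0,\alpha^i}]$, contradicting $j\not\equiv i\ (\mathrm{mod}\ s)$ and the disjointness of the difference-representatives of distinct $E_2$-classes recorded just after Lemma~\ref{lem:distinct_diff}.

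The step I expect to be the main obstacle is the application of Lemma~\ref{lem:triplet_exist}: that lemma as stated requires the two two-dimensional subspaces to have the \emph{same} difference $i$ in their generating pairs, and a class $[\Span{\alpha^0,\alpha^j}]$ of $E_2$ may present its members inside $Z$ through pairs whose differences are only congruent mod $s$ rather than literally equal. Here I would invoke Lemma~\ref{lem:distinct_diff} together with the convention fixed after it: within a single two-dimensional subspace the differences of distinct generating pairs are pairwise incongruent mod $s$, so two \emph{different} two-dimensional subspaces of $Z$ both lying in $[\Span{\alpha^0,\alpha^j}]$ genuinely do share a common exponent difference $j$ (each such subspace has a unique generating-pair difference congruent to $j$), which is exactly the hypothesis needed to run Lemma~\ref{lem:triplet_exist}. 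Once this congruence bookkeeping is handled, the contradiction above closes the proof, and in fact one gets the sharper statement that $Z$ meets each of the $q^2$ non-distinguished $E_2$-classes in exactly one two-dimensional subspace.
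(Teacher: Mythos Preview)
Your argument is correct and follows essentially the same route as the paper: assume two distinct two-dimensional subspaces of $Z$ lie in a class $[\Span{\alpha^0,\alpha^j}]$ with $[\Span{\alpha^0,\alpha^j}]\neq[\Span{\alpha^0,\alpha^i}]$, invoke Lemma~\ref{lem:triplet_exist} to write $Z=\Span{\alpha^r,\alpha^{r+j},\alpha^{r+2j}}$, and then use Lemmas~\ref{lem:q+1} and~\ref{lem:Ex_q+1} to force $[\Span{\alpha^0,\alpha^j}]=[\Span{\alpha^0,\alpha^i}]$. Your opening counting paragraph is not needed for the lemma as stated (it proves the sharper ``exactly one'' conclusion, which the paper defers to Lemma~\ref{lem:two_three_E}), and the concern you raise about applying Lemma~\ref{lem:triplet_exist} is a non-issue: by the remark following Lemma~\ref{lem:distinct_diff}, every member of $[\Span{\alpha^0,\alpha^j}]$ contains a generating pair with exponent difference exactly $j$, so the hypothesis of Lemma~\ref{lem:triplet_exist} is met verbatim.
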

\begin{proof}
Assume $Z$ is a three-dimensional subspace in the equivalence class
$[\Span { \alpha^0 , \alpha^i , \alpha^{2i} }]$.
If $Z$ has two two-dimensional subspaces of the form
$\Span{ \alpha^t , \alpha^{t+j} }$ and $\Span{ \alpha^\ell , \alpha^{\ell+j} }$,
$j \neq i$, then by Lemma~\ref{lem:triplet_exist},
$Z = \Span { \alpha^u , \alpha^{u+j} , \alpha^{u+2j} }$ for
some $u$, $0 \leq u \leq s-1$. This implies that
$Z \in [\Span { \alpha^0 , \alpha^i , \alpha^{2j} }]$
and hence by Lemma~\ref{lem:q+1}, it contains $q+1$ distinct
two-dimensional subspaces from $[\Span { \alpha^0 , \alpha^j }]$.
By Lemma~\ref{lem:Ex_q+1}, this implies that
$[\Span { \alpha^0 , \alpha^i }]=[\Span { \alpha^0 , \alpha^j }]$
and the lemma follows.
\end{proof}

\begin{proof}[{\bf Proof of Theorem~\ref{thm:two_three_E}}]
By Lemma~\ref{lem:Ex_q+1}, each three-dimensional subspace $Z$ of $\F_q^5$
contains $q+1$ distinct two-dimensional subspaces from exactly
one equivalence class of $E_2$. By Lemma~\ref{lem:Ex_one}, from each other
equivalence class of $E_2$ at most one two-dimensional
subspace is contained in~$Z$. Since a three-dimensional subspace
contains $q^2+q+1$ distinct two-dimensional subspaces it follows
that~$Z$ contains $q+1$ two-dimensional subspaces from one
equivalence class of $E_2$ and one two-dimensional subspace
from exactly $q^2$ different equivalence classes of $E_2$.
Since by Corollary~\ref{cor:num_E}, $E_2$ has exactly $q^2+1$ equivalence
classes the claim of the lemma follows immediately.
\end{proof}

%

Finally, we will use the following two lemmas in our construction.
\begin{lemma}
\label{lem:same_EQ1}
$[\Span{ \alpha^0 , \alpha^1 }] \neq [\Span{ \alpha^0 , \alpha^2 }]$.
\end{lemma}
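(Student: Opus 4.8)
The statement asserts that the equivalence class of $E_2$ containing $\Span{\alpha^0,\alpha^1}$ is different from the one containing $\Span{\alpha^0,\alpha^2}$. By the notation established after Lemma~\ref{lem:distinct_diff}, an equivalence class $[\Span{\alpha^0,\alpha^i}]$ is represented precisely by those $q^2+q$ values in $\cD$ that can occur as a ``difference'' $j$ in a pair $(\alpha^t,\alpha^{t+j})$ lying in a subspace of the class, and Lemma~\ref{lem:distinct_diff} guarantees these difference-sets are disjoint across distinct classes. So the plan is simply to show that $1$ and $2$ are \emph{not} in the same difference-set, equivalently that $\Span{\alpha^0,\alpha^1}$ does not equal $\alpha^m\Span{\alpha^0,\alpha^2}$ for any $m$; or more directly, that $\Span{\alpha^0,\alpha^2}$ does not contain two consecutive powers of $\alpha$ (a pair with difference $1$), and $\Span{\alpha^0,\alpha^1}$ does not contain a pair with difference $2$.

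First I would set up the concrete computation: the line $\Span{\alpha^0,\alpha^1}$ consists of the one-dimensional subspaces spanned by $\alpha^0$, by $\alpha^1$, and by $\alpha^0+\beta\alpha^1$ for $\beta\in\F_q\setminus\{0\}$, giving $q+1$ points in all. If this class equalled $[\Span{\alpha^0,\alpha^2}]$, then by the representative description $\Span{\alpha^0,\alpha^1}$ would contain a pair of points of the form $(\alpha^t,\alpha^{t+2})$, i.e. some two of its $q+1$ constituent one-dimensional subspaces would be $\alpha^t\F_q$ and $\alpha^{t+2}\F_q$. I would then enumerate the possibilities: the difference between the exponents of two of the points $\{\alpha^0,\alpha^1,\alpha^0+\beta\alpha^1\}$ must be $\equiv 2 \pmod s$. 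The pair $(\alpha^0,\alpha^1)$ has difference $1\ne 2$ (and $q\ge 2$ so $s\ge 3$). So at least one of the elements $\alpha^0+\beta\alpha^1$ must coincide, up to an $\F_q$-scalar, with $\alpha^2$, or else two of these ``generic'' points differ by exactly $2$; in the latter case $\alpha^0+\beta\alpha^1 = \gamma\alpha^2(\alpha^0+\beta'\alpha^1)$ for scalars, which after rearranging forces $\alpha^2$ to lie in $\Span{\alpha^0,\alpha^1}$ as well, reducing to the first case. So the crux becomes: can $\alpha^2 \in \Span{\alpha^0,\alpha^1}$? That would mean $\alpha^0,\alpha^1,\alpha^2$ are linearly dependent — contradicting Lemma~\ref{lem:3_ind} (with $i=1$). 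Hence $\alpha^2\notin\Span{\alpha^0,\alpha^1}$, and symmetrically $\alpha^0,\alpha^1$ cannot both lie in $\Span{\alpha^0,\alpha^2}$ without a third independent point collapsing, so neither class contains a pair realizing the other's difference; by Lemma~\ref{lem:distinct_diff} the two classes are distinct.

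I expect the only real subtlety to be the bookkeeping in the case analysis above — making sure that ``$\Span{\alpha^0,\alpha^1}$ contains a pair of difference $2$'' really does force $\alpha^2$ (or a scalar multiple, hence the same one-dimensional subspace) into $\Span{\alpha^0,\alpha^1}$, so that Lemma~\ref{lem:3_ind} applies cleanly. Once that reduction is made the contradiction is immediate. An alternative, perhaps cleaner, route that I would keep in reserve: argue directly that $\Span{\alpha^0,\alpha^1} = \alpha^m\Span{\alpha^0,\alpha^2}$ would give, after multiplying by $\alpha^{-m}$, a two-dimensional subspace containing $\alpha^{-m}, \alpha^{1-m}$ and also $\alpha^0,\alpha^2$; intersecting with the structure forces three consecutive-difference relations that contradict Lemma~\ref{lem:3_ind}. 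Either way the linear-independence of $\alpha^0,\alpha^i,\alpha^{2i}$ from Lemma~\ref{lem:3_ind} is the engine, and the proof should be short.
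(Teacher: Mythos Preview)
Your overall strategy is sound and matches the paper's: assume the two classes coincide, locate a pair $\alpha^{t},\alpha^{t+2}$ inside $\Span{\alpha^{0},\alpha^{1}}$, and derive an algebraic contradiction. The gap is in your ``two generic points'' sub-case. From
\[
1+\beta\alpha \;=\; \gamma\alpha^{2}\bigl(1+\beta'\alpha\bigr)
\;=\; \gamma\alpha^{2}+\gamma\beta'\alpha^{3}
\]
you assert that this ``forces $\alpha^{2}$ to lie in $\Span{\alpha^{0},\alpha^{1}}$''. It does not: when $\beta'\neq 0$ the right-hand side has a genuine $\alpha^{3}$ term, and $(1+\beta'\alpha)^{-1}$ is \emph{not} in $\Span{1,\alpha}$ (indeed, if it were, Lemma~\ref{lem:3_ind} would already be violated). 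So the reduction to Lemma~\ref{lem:3_ind} fails here. What the displayed equation actually gives is a nontrivial polynomial relation
\[
\gamma\beta'\alpha^{3}+\gamma\alpha^{2}-\beta\alpha-1 \;=\; 0
\]
of degree at most $3$ satisfied by $\alpha$, contradicting the fact that the minimal polynomial of $\alpha$ over $\F_q$ has degree $5$. This is exactly the argument the paper uses: it writes $\alpha^{j}=c+d\alpha$, $\alpha^{j+2}=a+b\alpha$, divides to get $c\alpha^{2}+d\alpha^{3}=a+b\alpha$, and invokes the degree-$5$ minimality directly---no case split and no appeal to Lemma~\ref{lem:3_ind}.

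Your case analysis is also a bit thin: pairs such as $(\alpha^{1},\alpha^{3})$ or $(\alpha^{-1},\alpha^{1})$ are not covered by ``one generic point equals $\alpha^{2}$'' or ``two generic points''. These extra cases are harmless once you switch engines (each gives a relation of degree $\le 3$ in $\alpha$), but as written they are missing. In short: keep your setup, drop the attempt to funnel everything through Lemma~\ref{lem:3_ind}, and conclude instead via the minimal-polynomial degree, as the paper does.
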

\begin{proof}
Assume the contrary, that
$[\Span{ \alpha^0 , \alpha^1 }] = [\Span{ \alpha^0 , \alpha^2}]$.
THis implies that there exists a $j$, $5 \leq j \leq s-4$, such that
$\alpha^0 , \alpha^1 , \alpha^j , \alpha^{j+2} \in \Span{ \alpha^0 , \alpha^1 }$.
This implies that $\alpha^{j+2} = a + b \alpha$ and $\alpha^j = c + d \alpha$,
$a,b,c,d \in \F_q$.
Therefore, $\alpha^2 = \frac{a + b \alpha}{c + d \alpha}$ and
hence  $c \alpha^2 + d \alpha^3 =  a + b \alpha$. A nontrivial
solution for this equation implies that $\alpha$ is a root of a cubic
polynomial, contradicting the fact that lowest degree polynomial
for which $\alpha$ is a root has degree five. Thus,
$[\Span{ \alpha^0 , \alpha^1 }] \neq [\Span{ \alpha^0 , \alpha^2 }]$.
\end{proof}

\begin{lemma}
\label{lem:same_EQ2}
$[\Span{ \alpha^0 , \alpha^1 , \alpha^2}] \neq [\Span{ \alpha^0 , \alpha^1 , \alpha^3}]$.
\end{lemma}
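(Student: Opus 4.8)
The plan is to argue by contradiction exactly as in Lemma~\ref{lem:same_EQ1}, using the notation established after Lemma~\ref{lem:Ex_q+1}: a three-dimensional subspace lies in the class $[\Span{\alpha^0,\alpha^i,\alpha^{2i}}]$ precisely when it has a representation $\Span{\alpha^r,\alpha^{r+i},\alpha^{r+2i}}$. So I would assume $[\Span{\alpha^0,\alpha^1,\alpha^2}] = [\Span{\alpha^0,\alpha^1,\alpha^3}]$. Then the three-dimensional subspace $Z = \Span{\alpha^0,\alpha^1,\alpha^2}$, being in the class on the right-hand side, must also admit a representation of the form $\Span{\alpha^r,\alpha^{r+3},\alpha^{r+6}}$ for some $r$ with $0\le r\le s-1$. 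Equivalently, $Z$ must contain three one-dimensional subspaces $\alpha^r,\alpha^{r+3},\alpha^{r+6}$.

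First I would translate membership in $Z$ into a linear-algebra statement: every element of $Z$ is of the form $a + b\alpha + c\alpha^2$ with $a,b,c\in\F_q$. So I need $\alpha^r = a_0 + b_0\alpha + c_0\alpha^2$, $\alpha^{r+3} = a_1 + b_1\alpha + c_1\alpha^2$, and $\alpha^{r+6} = a_2 + b_2\alpha + c_2\alpha^2$. Multiplying the first equation by $\alpha^3$ and comparing with the second, and likewise the second by $\alpha^3$ compared with the third, I would try to force $\alpha$ to satisfy a polynomial of degree at most four, contradicting the fact that the minimal polynomial of $\alpha$ over $\F_q$ has degree five. Concretely, from $\alpha^{r+3} = \alpha^3(a_0 + b_0\alpha + c_0\alpha^2) = a_0\alpha^3 + b_0\alpha^4 + c_0\alpha^5$ and $\alpha^{r+3} = a_1 + b_1\alpha + c_1\alpha^2$ one gets $c_0\alpha^5 + b_0\alpha^4 + a_0\alpha^3 - c_1\alpha^2 - b_1\alpha - a_1 = 0$; if $(a_0,b_0,c_0)\ne 0$ this is already a nontrivial relation of degree $\le 5$, but degree exactly $5$ is allowed, so this alone is not yet a contradiction. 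The refinement is to also use the relation coming from $\alpha^{r+6} = \alpha^6(a_0+b_0\alpha+c_0\alpha^2)$ together with the second one, or to eliminate $\alpha^5$ between the two degree-$5$ relations; the difference of suitable $\F_q$-multiples of the two relations kills the $\alpha^5$ term and leaves a relation of degree at most $4$, which forces all its coefficients to vanish. Chasing these vanishings back should force $(a_0,b_0,c_0)$, $(a_1,b_1,c_1)$, $(a_2,b_2,c_2)$ to be dependent in a way incompatible with $\alpha^r,\alpha^{r+3},\alpha^{r+6}$ being three genuinely distinct one-dimensional subspaces spanning $Z$.

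The main obstacle is precisely this bookkeeping: a single ``$\alpha$ is a root of a low-degree polynomial'' argument of the type used in Lemma~\ref{lem:same_EQ1} does not immediately apply, because the natural relation one writes down has degree exactly $5$, not $\le 4$. I expect the cleanest route is instead to invoke the structural results already proved: by Lemma~\ref{lem:Ex_q+1} each three-dimensional subspace contains $q+1$ two-dimensional subspaces from \emph{exactly one} class of $E_2$, and the class $[\Span{\alpha^0,\alpha^i,\alpha^{2i}}]$ determines (and is determined by) the class $[\Span{\alpha^0,\alpha^i}]$. Hence $[\Span{\alpha^0,\alpha^1,\alpha^2}] = [\Span{\alpha^0,\alpha^1,\alpha^3}]$ would force $[\Span{\alpha^0,\alpha^1}] = [\Span{\alpha^0,\alpha^1}]$, which is trivially true and gives nothing — so I would instead look at the \emph{second} index: the representation $\Span{\alpha^r,\alpha^{r+i},\alpha^{r+2i}}$ of a given $Z$ is essentially unique in $i$ by Lemma~\ref{lem:Ex_q+1} (two different values $i=1$ and $i=3$ would give $Z$ two ``rich'' $E_2$-classes $[\Span{\alpha^0,\alpha^1}]$ and $[\Span{\alpha^0,\alpha^3}]$, contradicting uniqueness), together with Lemma~\ref{lem:same_EQ1}-style independence showing $[\Span{\alpha^0,\alpha^1}]\ne[\Span{\alpha^0,\alpha^3}]$ (same cubic-polynomial argument, since $\alpha^3 = (a+b\alpha)/(c+d\alpha)$ again yields a degree-$\le 3$ relation). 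So the real work reduces to: (i) the uniqueness of $i$ in the canonical representation, extracted from Lemma~\ref{lem:Ex_q+1}, and (ii) the observation $[\Span{\alpha^0,\alpha^1}]\ne[\Span{\alpha^0,\alpha^3}]$, which is a verbatim repeat of Lemma~\ref{lem:same_EQ1}. Assembling (i) and (ii) gives the contradiction and completes the proof.
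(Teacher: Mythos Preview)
Your proposal rests on a misreading of the notation $[\Span{\alpha^0,\alpha^1,\alpha^3}]$. The exponents here are $0,1,3$, which do \emph{not} form an arithmetic progression; this symbol denotes the $E_3$-class of the concrete three-dimensional subspace $\Span{\alpha^0,\alpha^1,\alpha^3}$ (compare the subspaces $\Span{\alpha^1,\alpha^2,\alpha^4}$ and $\Span{\alpha^{\ell+1},\alpha^{\ell+2},\alpha^{\ell+4}}$ that appear in Section~\ref{sec:Hamilton}). Consequently, the assumption $[\Span{\alpha^0,\alpha^1,\alpha^2}]=[\Span{\alpha^0,\alpha^1,\alpha^3}]$ yields a $j$ with $\alpha^j,\alpha^{j+1},\alpha^{j+3}\in Z=\Span{\alpha^0,\alpha^1,\alpha^2}$, \emph{not} elements $\alpha^r,\alpha^{r+3},\alpha^{r+6}$ in $Z$. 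Both of your routes---the direct degree-count and the structural ``$i=1$ versus $i=3$'' argument via Lemma~\ref{lem:Ex_q+1}---are therefore aimed at the wrong statement; in particular there is no reason the ``rich'' $E_2$-class of $\Span{\alpha^0,\alpha^1,\alpha^3}$ should be $[\Span{\alpha^0,\alpha^3}]$.

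The paper's proof does use the structural lemmas in the spirit of your second approach, but from the correct starting point. Since $\Span{\alpha^j,\alpha^{j+1}}\subset Z$ lies in $[\Span{\alpha^0,\alpha^1}]$, the explicit list of the $q+1$ such subspaces (from the proof of Lemma~\ref{lem:q+1}) forces $\alpha^j\in\Span{\alpha^0,\alpha^1}$. If $j\in\{0,1\}$ one gets a polynomial relation of degree $\le 4$ for $\alpha$ directly. Otherwise $\alpha^j\in\Span{\alpha^0,\alpha^1}\setminus\{\alpha^0,\alpha^1\}$, hence $\alpha^{j+1}\in\Span{\alpha^1,\alpha^2}$, and now $Z$ contains two distinct members of $[\Span{\alpha^0,\alpha^2}]$, namely $\Span{\alpha^0,\alpha^2}$ and $\Span{\alpha^{j+1},\alpha^{j+3}}$. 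By Lemma~\ref{lem:two_three_E} this forces $[\Span{\alpha^0,\alpha^1}]=[\Span{\alpha^0,\alpha^2}]$, contradicting Lemma~\ref{lem:same_EQ1}. So the relevant auxiliary inequality is $[\Span{\alpha^0,\alpha^1}]\ne[\Span{\alpha^0,\alpha^2}]$, already available, rather than $[\Span{\alpha^0,\alpha^1}]\ne[\Span{\alpha^0,\alpha^3}]$.
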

\begin{proof}
Assume the contrary, that
$[\Span{ \alpha^0 , \alpha^1 , \alpha^2}] = [\Span{ \alpha^0 , \alpha^1 , \alpha^3}]$.
$\Span{ \alpha^0 , \alpha^1 }  = \{ \alpha^0 , \alpha^1 , \alpha^{i_1} , \alpha^{i_2} , \ldots , \alpha^{i_{q-1}} \}$
and hence $\Span{ \alpha^1 , \alpha^2 }  = \{ \alpha^1 , \alpha^2 , \alpha^{i_1+1} , \alpha^{i_2+1} , \ldots , \alpha^{i_{q-1}+1} \}$.
Since $\Span{ \alpha^0 , \alpha^1 , \alpha^2} \in [\Span{ \alpha^0 , \alpha^1 , \alpha^3}]$ it follows
that there exists a $j$, such that $\alpha^j , \alpha^{j+1} , \alpha^{j+3} \in \Span{ \alpha^0 , \alpha^1 , \alpha^2}$.
Clearly $j \in \{ 0,1, i_1 , i_2 , \ldots , i_{q-1} \}$. If $j=0$ or $j=1$ then $\alpha$ is a root
of a polynomial of degree less than five. Therefore, $j=i_r$ for some $1 \leq r \leq q-1$.
This implies that $\Span{ \alpha^0 , \alpha^2 }$ and $\Span{ \alpha^{i_r+1} , \alpha^{i_r+3} }$
are subspaces of $\Span{ \alpha^0 , \alpha^1 , \alpha^2}$.

Therefore, there are at least two distinct subspaces from $[\Span{ \alpha^0 , \alpha^2 }]$
which are contained in $\Span{ \alpha^0 , \alpha^1 , \alpha^2}$. Hence, by Theorem~\ref{thm:two_three_E}
we have that $[\Span{ \alpha^0 , \alpha^1 }]=[\Span{ \alpha^0 , \alpha^2 }]$ which contradicts
Lemma~\ref{lem:same_EQ1}.
\end{proof}

\section{Hamiltonian Cycles}
\label{sec:Hamilton}

In this section we will describe a construction for
Hamiltonian cycles in the middle levels of~$\cP_q(5)$
for any given power of a prime $q$. The construction will
be based on Theorem~\ref{thm:long_cycle} and
Corollary~\ref{cor:long_cycle}. To satisfy the requirements
of Theorem~\ref{thm:long_cycle} we will use the properties,
of two-dimensional and three-dimensional subspaces,
which were proved in the previous section. We will also show that the
number of such Hamiltonian cycles is very large. The construction
will be performed in a few steps. First, we will present a construction
which can satisfy the requirements of
Theorem~\ref{thm:long_cycle}. Unfortunately, we wouldn't be able to prove
this for all parameters.
Therefore, we will make a small modification in the construction
and present a construction which yields a Hamiltonian path.
Finally, we will make more modifications in the construction
which will enable us to construct many Hamiltonian cycles
in the middle levels of $\cP_q(5)$. In the sequel, given
a path (cycle) $P = X_1 , X_2 , \ldots , X_t$ which consists of
subspaces from $\cP_q(5)$ we define the path (cycle) $\beta P$, $\beta \in \F_{q^5}$,
by $\beta P \deff \beta X_1 , \beta X_2 , \ldots , \beta X_t$.

Let $\cA_1,\cA_2,\ldots,\cA_e$ be any order of the $e=q^2+1$ equivalence
classes of $E_3$, in which
$\cA_1 = [\Span{ \alpha^0 , \alpha^1 , \alpha^2}]$.
Let $\cB_1,\cB_2,\ldots,\cB_e$ be any order of the $q^2+1$ equivalence
classes of $E_2$, in which
$\cB_e =[ \Span{ \alpha^0 , \alpha^1 }]$. The following lemma
is an immediate consequence of  Theorem~\ref{thm:two_three_E}.

\begin{lemma}
\label{lem:simple_order}
There exists a path $P=U_1,V_1,U_2,V_2,\ldots,U_e,V_e$ in
the middle levels of $\cP_q(5)$, such that
$U_i \in \cA_i$, $V_i \in \cB_i$, $U_i \supset V_i$, for each
$1 \leq i \leq e$; $V_i \subset U_{i+1}$, for each
$1 \leq i \leq e-1$.
\end{lemma}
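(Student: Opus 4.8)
The plan is to build the path $P$ greedily, one pair $(U_i, V_i)$ at a time, using the incidence structure guaranteed by Lemma~\ref{lem:two_three_E}. First I would observe the two key facts that Lemma~\ref{lem:two_three_E} hands us: (i) every three-dimensional subspace $Z$ contains exactly one two-dimensional subspace from each equivalence class of $E_2$ other than the distinguished class $[\Span{\alpha^0,\alpha^i}]$ attached to $Z$'s own $E_3$-class, and (ii) it contains $q+1$ two-dimensional subspaces from that distinguished class. In particular, for \emph{any} three-dimensional subspace $Z$ and \emph{any} equivalence class $\cC$ of $E_2$, there is at least one two-dimensional subspace $V \subset Z$ with $V \in \cC$ (exactly one if $\cC$ is not the distinguished class of $Z$, and $q+1$ of them if it is). Dually, every two-dimensional subspace $V$ is contained in $\frac{q^3-1}{q-1} = q^2+q+1$ three-dimensional subspaces, and I would want to argue these are spread across the $E_3$-classes so that for any $E_3$-class $\cA$ there is a three-dimensional subspace in $\cA$ containing $V$; this follows by the same counting/duality packaged in Lemma~\ref{lem:two_three_E} (the relation between the distinguished $E_2$-class of $Z$ and the $E_3$-class of $Z$ is a bijection $\cB_j \mapsto$ an $E_3$-class, so every $E_2$-class is the distinguished class of exactly one $E_3$-class, and one reads off containment from there).

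With those incidences in hand, I would construct $P$ inductively. Start by picking any $U_1 \in \cA_1$ and, since $\cB_1$ is some $E_2$-class, pick $V_1 \subset U_1$ with $V_1 \in \cB_1$ (possible by fact (i)/(ii) above). Having chosen $U_i, V_i$ with $V_i \in \cB_i$, I choose $U_{i+1} \in \cA_{i+1}$ with $V_i \subset U_{i+1}$ — this uses the dual incidence statement that $V_i$, lying in $\cB_i$, is contained in a three-dimensional subspace from the prescribed class $\cA_{i+1}$. Then I choose $V_{i+1} \subset U_{i+1}$ with $V_{i+1} \in \cB_{i+1}$, again by fact (i)/(ii). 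This produces the alternating sequence $U_1, V_1, U_2, V_2, \ldots, U_e, V_e$ with all the required containments $U_i \supset V_i$ and $V_i \subset U_{i+1}$; the edges $U_iV_i$ and $V_iU_{i+1}$ are edges of the middle-levels graph of $\cP_q(5)$ by the dimension condition, so $P$ is a walk there.

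The remaining point is that $P$ should be a \emph{path}, i.e. the $2e$ subspaces are distinct. The $U_i$ lie in pairwise distinct $E_3$-classes $\cA_1,\ldots,\cA_e$ and the $V_i$ lie in pairwise distinct $E_2$-classes $\cB_1,\ldots,\cB_e$, so no two $U$'s coincide, no two $V$'s coincide, and no $U$ equals a $V$ (they have different dimensions). Hence distinctness is automatic from the fact that we are forced to march through all $e$ classes on each side — this is exactly why the statement fixes orderings of the classes rather than just asking for some path. I expect the only genuine piece of work is making the dual incidence precise: spelling out, from Lemma~\ref{lem:two_three_E} and the counting in Corollary~\ref{cor:num_E}, that the map sending an $E_3$-class to its distinguished $E_2$-class is a bijection, and that consequently every $(V, \cA)$ with $V$ a two-dimensional subspace and $\cA$ an $E_3$-class admits a $Z \in \cA$ with $V \subset Z$. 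Everything else is the greedy construction above, which never gets stuck precisely because each required incidence is guaranteed to exist.
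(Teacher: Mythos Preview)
Your proposal is correct and is exactly what the paper intends: it declares the lemma an immediate consequence of Lemma~\ref{lem:two_three_E} without further argument, and your greedy construction is the natural way to read that. For the dual incidence you flag, the quickest completion is to observe that multiplication by $\alpha$ acts transitively on each equivalence class, so the double count $\sum_{Z\in\cA}\abs{\{V\in\cB:V\subset Z\}}$ supplied by Lemma~\ref{lem:two_three_E} distributes evenly over the $s$ members of $\cB$, giving every $V\in\cB$ exactly one (or $q+1$, if $\cB$ is the distinguished class of $\cA$) containing $Z\in\cA$.
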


The path obtained via Lemma~\ref{lem:simple_order} can be chosen in
many different ways. We will restrict these choices in the following way.
Let $U_1 = \Span{ \alpha^0 , \alpha^1 , \alpha^2}$,
$V_1 = \Span{ \alpha^0 , \alpha^2}$, $U_e$ the
two-dimensional subspace $\Span{ \alpha^{\ell+1} , \alpha^{\ell+2} , \alpha^{\ell+4} }$
(clearly, by definition $V_{e-1} \subset U_e$), and $V_e$ the
unique two-dimensional subspace $\Span{ \alpha^{\ell+1} , \alpha^{\ell+2} }$
for which $V_e \subset U_e$. These choices also determine
the values of $\cA_1$, $\cB_1$, $\cA_e$, and $\cB_e$.
By Lemmas~\ref{lem:same_EQ1} and~\ref{lem:same_EQ2},
$U_1$, $V_1$, $U_e$, and $V_e$ are contained in different equivalence
classes of $E_2$ and $E_3$. Therefore, this order is possible and it is
well defined. The parameter~$\ell$ is
defined by this order, even so we should understand
that there are many possible choices to make this order and $\ell$ can
differ between these different possible choices. If $g = \gcd ( \ell , s)$,
and assuming $\ell \neq 0$ (we consider the simpler case $\ell=0$
at the end of this section), then let
$\Pi=P , \alpha^\ell P , \alpha^{2 \ell} P^0 , \ldots , \alpha^{(\frac{s}{g}-1)\ell} P$
be a sequence of subspaces (vertices) from the middle levels
of~$\cP_q(5)$.

\begin{lemma}
\label{lem:P_alP_path}
If $\ell \neq 0$ then
the sequence of vertices (subspaces) $P , \alpha^\ell P$
is a path in the middle levels of $\cP_q(5)$.
\end{lemma}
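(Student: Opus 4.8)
The plan is to verify directly the two conditions that make $P,\alpha^\ell P$ a path in the middle levels of $\cP_q(5)$: every pair of consecutive vertices is an edge, and all the vertices are pairwise distinct. I would begin by invoking Lemma~\ref{lem:simple_order}, which already gives that $P=U_1,V_1,\dots,U_e,V_e$ is a path, so its $2e$ vertices are distinct and each consecutive pair is an edge. Next I would note that $X\mapsto\alpha^\ell X$ is a dimension-preserving bijection of the subspaces of $\F_q^5$ which carries each inclusion $X\subset Y$ to $\alpha^\ell X\subset\alpha^\ell Y$, hence is an automorphism of the middle levels graph; applying it to $P$ shows that $\alpha^\ell P=\alpha^\ell U_1,\alpha^\ell V_1,\dots,\alpha^\ell U_e,\alpha^\ell V_e$ is again a path with $2e$ distinct vertices and consecutive edges. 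So only two things remain: (i) the ``seam'' edge between the last vertex $V_e$ of $P$ and the first vertex $\alpha^\ell U_1$ of $\alpha^\ell P$, and (ii) that $P$ and $\alpha^\ell P$ have no common vertex.

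For (i) the verification is a one-liner that exploits the specific choices made just before the lemma: since $U_1=\Span{\alpha^0,\alpha^1,\alpha^2}$ we have $\alpha^\ell U_1=\Span{\alpha^\ell,\alpha^{\ell+1},\alpha^{\ell+2}}$, a three-dimensional subspace, whereas $V_e=\Span{\alpha^{\ell+1},\alpha^{\ell+2}}$ is two-dimensional, so $V_e\subset\alpha^\ell U_1$ and their dimensions differ by one --- exactly the adjacency condition in $\cP_q(5)$.

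For (ii) the argument rests on equivalence-class bookkeeping. A vertex of $P$ and a vertex of $\alpha^\ell P$ can coincide only if they have equal dimension, so it suffices to compare the three-dimensional vertices $U_i$ with the $\alpha^\ell U_j$ and, separately, the two-dimensional vertices $V_i$ with the $\alpha^\ell V_j$. By the definition of $E_3$, $\alpha^\ell U_j$ lies in the same $E_3$-class $\cA_j$ as $U_j$; since $\cA_1,\dots,\cA_e$ are pairwise distinct (that is how $P$ was built), a coincidence $U_i=\alpha^\ell U_j$ forces $i=j$. Then Lemma~\ref{lem:size_E} gives that $\cA_i$ has exactly $s$ elements, so the orbit $U_i,\alpha U_i,\dots,\alpha^{s-1}U_i$ consists of $s$ distinct subspaces; as $\ell$ is not a multiple of $s$ (this is where the hypothesis $\ell\neq 0$ enters), $U_i\neq\alpha^\ell U_i$. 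The identical argument using $E_2$, the distinct classes $\cB_i$, and Lemma~\ref{lem:size_E} disposes of the two-dimensional vertices. Combining (i) and (ii) yields that $P,\alpha^\ell P$ is a path.

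I expect step (ii), the vertex-distinctness, to be the only place requiring any real care, and even there the content is entirely routine: one just needs that the $E_2$- and $E_3$-classes appearing along $P$ are all distinct and that these classes have full length $s$. Step (i) and the automorphism remark are immediate.
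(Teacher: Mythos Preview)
Your proof is correct and follows the same line as the paper's: both note that $P$ is a path by construction, that multiplication by $\alpha^\ell$ is a graph automorphism so $\alpha^\ell P$ is a path, and then verify the seam edge $V_e=\Span{\alpha^{\ell+1},\alpha^{\ell+2}}\subset\Span{\alpha^\ell,\alpha^{\ell+1},\alpha^{\ell+2}}=\alpha^\ell U_1$ exactly as you do. The only difference is that you also carry out the vertex-distinctness check~(ii), whereas the paper's proof of this lemma omits it entirely and instead establishes distinctness for all of $\Pi$ in the subsequent Lemma~\ref{Pi_path}; your inclusion of~(ii) here is more careful but not a different method.
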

\begin{proof}
By definition, $P$ is a path in the middle
levels of $\cP_q(5)$ and hence also $\alpha^\ell P$
is a path in the middle levels of $\cP_q(5)$. The last vertex
in $P$ is $V_e=\Span{ \alpha^{\ell+1} , \alpha^{\ell+2} }$,
while the first vertex in~$\alpha^\ell P$ is
$\alpha^\ell U_1 =  \Span{ \alpha^\ell , \alpha^{\ell+1} , \alpha^{\ell+2}}$.
Clearly $V_e \subset \alpha^\ell U_1$ and hence $P , \alpha^\ell P$
is a path in the middle levels of $\cP_q(5)$.
\end{proof}
\begin{lemma}
\label{Pi_path}
If $\ell \neq 0$ then
the sequence of vertices (subspaces) $\Pi$ is a cycle
in the middle levels of $\cP_q(5)$ with $\frac{s}{g} 2e$ distinct vertices.
\end{lemma}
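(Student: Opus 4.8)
The plan is to show two things: first, that $\Pi$, read as the concatenation of the $\tfrac sg$ blocks $P,\alpha^\ell P,\dots,\alpha^{(\frac sg-1)\ell}P$, is a closed walk in the middle levels of $\cP_q(5)$; and second, that all $2e\cdot\tfrac sg$ of its listed vertices are pairwise distinct. A closed walk with no repeated vertex is a cycle of the asserted length, so these two facts finish the proof. For the closed-walk part, $P$ is a path by Lemma~\ref{lem:simple_order} together with the restrictions imposed afterwards, and since multiplication by a fixed $\alpha^{j\ell}$ preserves dimensions and the containment relation, each block $\alpha^{j\ell}P$ is again a path. Consecutive blocks fit together: arguing exactly as in Lemma~\ref{lem:P_alP_path} but translated by $\alpha^{j\ell}$, the last vertex $\alpha^{j\ell}V_e$ of $\alpha^{j\ell}P$ is contained in the first vertex $\alpha^{(j+1)\ell}U_1$ of $\alpha^{(j+1)\ell}P$. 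Finally the walk closes up: the last vertex of the last block is $\alpha^{(\frac sg-1)\ell}V_e=\Span{\alpha^{\frac sg\ell+1},\alpha^{\frac sg\ell+2}}$, and since $\alpha^{\frac sg\ell}=(\alpha^{s})^{\ell/g}$ lies in $\F_q\setminus\{0\}$ (the element $\alpha^{s}$ has multiplicative order $q-1$ and so generates $\F_q\setminus\{0\}$), the convention identifying $\alpha^i$ with its nonzero $\F_q$-multiples gives $\Span{\alpha^{\frac sg\ell+1},\alpha^{\frac sg\ell+2}}=\Span{\alpha^1,\alpha^2}\subset U_1$, the first vertex of $P$.

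For distinctness I would sort the vertices of $\Pi$ by equivalence class. Since $U_i\in\cA_i$ and $V_i\in\cB_i$, and the classes $\cA_i$ of $E_3$ and $\cB_i$ of $E_2$ are each closed under multiplication by powers of $\alpha$, the vertices of $\Pi$ lying in $\cA_i$ are precisely $U_i,\alpha^\ell U_i,\dots,\alpha^{(\frac sg-1)\ell}U_i$ (one per block), and those lying in $\cB_i$ are precisely $\alpha^{j\ell}V_i$, $0\le j\le\tfrac sg-1$. By Lemma~\ref{lem:full_cycle} together with Corollary~\ref{cor:gcd}, $\alpha^m U_i=U_i$ holds exactly when $s\mid m$; writing $s=g s'$ and $\ell=g\ell'$ with $\gcd(s',\ell')=1$ turns $\alpha^{a\ell}U_i=\alpha^{b\ell}U_i$ into the condition $\tfrac sg\mid a-b$, so the $\tfrac sg$ vertices $\alpha^{j\ell}U_i$ are pairwise distinct; the identical argument, using Lemma~\ref{lem:size_E}, applies to the $\alpha^{j\ell}V_i$. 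Since subspaces lying in distinct equivalence classes, or of different dimension, can never coincide, and $\cA_1,\dots,\cA_e,\cB_1,\dots,\cB_e$ are $2e$ pairwise distinct classes, all $2e\cdot\tfrac sg$ vertices of $\Pi$ are pairwise distinct. Combined with the previous paragraph, $\Pi$ is a cycle on $2e\tfrac sg$ vertices.

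The work here is light because Lemmas~\ref{lem:P_alP_path} and~\ref{lem:full_cycle} already supply the two substantive ingredients; the only point needing real care is the wrap-around computation — recognizing that $\alpha^{(\frac sg)\ell}$ is a scalar in $\F_q\setminus\{0\}$, so that $\alpha^{(\frac sg-1)\ell}V_e$ collapses to $\Span{\alpha^1,\alpha^2}$ and is therefore a subspace of $U_1$ — together with the parallel bookkeeping that each block contributes exactly one vertex to each of the $2e$ equivalence classes. One should also note that $0<\ell<s$ forces $g<s$, hence $\tfrac sg\ge 2$, so that $\Pi$ is a genuine cycle and not a degenerate one-block object.
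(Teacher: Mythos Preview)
Your proof is correct and follows the same approach as the paper: Lemma~\ref{lem:P_alP_path} for consecutive-block connectivity and the $\gcd$ argument via Lemma~\ref{lem:size_E} for distinctness within each orbit $\{\alpha^{j\ell}U_i\}$ and $\{\alpha^{j\ell}V_i\}$. You are in fact more careful than the paper, which neither spells out the wrap-around computation (that $\alpha^{(s/g)\ell}\in\F_q\setminus\{0\}$, so the last vertex of $\Pi$ is contained in $U_1$) nor explicitly invokes the fact that vertices lying in distinct equivalence classes cannot coincide.
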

\begin{proof}
By Lemma~\ref{lem:P_alP_path}, we have that $\Pi$ is a cycle in
the middle levels of $\cP_q(5)$.
Since $g = \gcd ( \ell , s)$ it follows that the set
$\{ \alpha^{i \ell} U_j ~:~ 0 \leq i < \frac{s}{g} \}$ contains
$\frac{s}{g}$ distinct three-dimensional subspaces for each $j$,
$1 \leq j \leq e$.
Similarly, $\{ \alpha^{i \ell} V_j ~:~ 0 \leq i < \frac{s}{g} \}$ contains
$\frac{s}{g}$ distinct two-dimensional subspaces for each $j$,
$1 \leq j \leq e$. Therefore,
$\Pi$ is a cycle in the middle
levels of $\cP_q(5)$ with $\frac{s}{g} 2e$ distinct vertices.
\end{proof}
\begin{cor}
\label{cor:g=1}
If $\ell \neq 0$ and $g=1$ then $\Pi$ is a Hamiltonian cycle in
the middle levels of $\cP_q(5)$.
\end{cor}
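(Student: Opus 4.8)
Corollary 9 (the final statement, \verb|cor:g=1|): if $g = \gcd(\ell,s) = 1$ then $\Pi$ is a Hamiltonian cycle in the middle levels of $\cP_q(5)$.

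The plan is to combine Lemma~\ref{Pi_path} with a counting argument, exactly in the spirit of Corollary~\ref{cor:long_cycle}. By Lemma~\ref{Pi_path}, $\Pi$ is a cycle in the middle levels of $\cP_q(5)$ with $\frac{s}{g}\,2e$ distinct vertices. Setting $g=1$, this count becomes $2es = 2(q^2+1)s$. So I first observe that the number of vertices in the middle levels of $\cP_q(5)$ is exactly the number of two-dimensional subspaces plus the number of three-dimensional subspaces of $\F_q^5$, which by Corollary~\ref{cor:num_E} and Lemma~\ref{lem:size_E} equals $(q^2+1)s + (q^2+1)s = 2(q^2+1)s$. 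Hence $\Pi$ is a cycle visiting exactly as many distinct vertices as the whole middle levels graph has.

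The remaining point is that a cycle whose vertex set has the full size must actually be a Hamiltonian cycle: since all $\frac{s}{g}2e = 2(q^2+1)s$ vertices of $\Pi$ are distinct (Lemma~\ref{Pi_path}) and this number equals the total number of vertices in the middle levels graph, the vertex set of $\Pi$ coincides with the full vertex set, so $\Pi$ is Hamiltonian. This is the entire argument; the ``hard part'' was already carried out in Lemma~\ref{Pi_path}, namely verifying that the shifted copies $\alpha^{i\ell}U_j$ and $\alpha^{i\ell}V_j$ are pairwise distinct across $0\le i<\frac{s}{g}$ and $1\le j\le e$, which relies on Corollary~\ref{cor:gcd} (the equivalence classes of $E_2$ and $E_3$ have size $s$) together with the fact that $U_1,V_1,U_e,V_e$ lie in distinct equivalence classes (Lemmas~\ref{lem:same_EQ1} and~\ref{lem:same_EQ2}) so that no collisions arise at the splice points. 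I would therefore keep the proof to a single short paragraph invoking Lemma~\ref{Pi_path} and the vertex count from Corollary~\ref{cor:num_E} and Lemma~\ref{lem:size_E}.
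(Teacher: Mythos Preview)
Your argument is correct and matches the paper's intended reasoning: the corollary is stated without proof precisely because it follows immediately from Lemma~\ref{Pi_path} together with the vertex count $2(q^2+1)s$ coming from Lemma~\ref{lem:size_E} and Corollary~\ref{cor:num_E}. The additional remarks you make about what underlies Lemma~\ref{Pi_path} are accurate but unnecessary for the corollary itself.
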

\begin{cor}
If $\ell \neq 0$ then the
$g$ cycles $\Pi, ~ \alpha \Pi , ~ \alpha^2 \Pi , \ldots , \alpha^{g-1} \Pi$,
contain exactly all the vertices of the middle levels of $\cP_q(5)$.
Each vertex from the middle levels of $\cP_q(5)$ is contained exactly
once in one of these cycles.
\end{cor}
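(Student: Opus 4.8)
The plan is to combine a vertex count with a disjointness argument. First I would record that the middle levels of $\cP_q(5)$ has exactly $2s(q^2+1)$ vertices: by Lemma~\ref{lem:size_E} and Corollary~\ref{cor:num_E} there are $\sbinomq{5}{2} = s(q^2+1)$ two-dimensional subspaces of $\F_q^5$ and, by the symmetry of the Gaussian coefficient, equally many three-dimensional subspaces. Writing $e = q^2+1$, this is $2se$ vertices. Next I would apply Lemma~\ref{Pi_path} to each cycle $\alpha^a \Pi$, $0 \le a \le g-1$: the proof of that lemma uses only $g = \gcd(\ell,s)$ and the shift-invariance of the equivalence classes of $E_2$ and $E_3$, so it applies verbatim with $\alpha^a P$ in place of $P$ and shows that $\alpha^a \Pi$ is a cycle with exactly $\frac{s}{g}\,2e$ distinct vertices. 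Hence the $g$ cycles together contain at most $g \cdot \frac{s}{g}\,2e = 2se$ vertices, with equality precisely when they are pairwise vertex-disjoint.

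The crux is therefore to show that $\alpha^a \Pi$ and $\alpha^b \Pi$ share no vertex for $0 \le a < b \le g-1$. The vertices of $\alpha^a \Pi$ are the subspaces $\alpha^{a+i\ell} U_j$ and $\alpha^{a+i\ell} V_j$ with $0 \le i < s/g$ and $1 \le j \le e$; a shared vertex must have the same dimension, so consider first $\alpha^{a+i\ell} U_j = \alpha^{b+i'\ell} U_{j'}$. Both sides lie in the $E_3$-class of $U_j$, respectively $U_{j'}$; since $U_j \in \cA_j$ and $U_{j'} \in \cA_{j'}$ with the $\cA_m$ distinct, this forces $j = j'$, and then $\alpha^{(a-b)+(i-i')\ell} U_j = U_j$. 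By Lemma~\ref{lem:full_cycle} with Corollary~\ref{cor:gcd}, the $E_3$-class of $U_j$ has $s$ elements, and since $\alpha^s$ acts as a scalar the stabilizer of $U_j$ among the powers of $\alpha$ (acting on the $s$ one-dimensional subspaces) is exactly $\{\alpha^{ms} : m \in \Z\}$; hence $(a-b)+(i-i')\ell \equiv 0 \pmod s$. As $g \mid \ell$ and $g \mid s$ we get $g \mid (a-b)$, impossible for $0 \le a < b \le g-1$. The same computation with $V_j$ and the classes $\cB_m$ of $E_2$ rules out a shared two-dimensional vertex. Thus the $g$ cycles are pairwise disjoint.

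Combining the two parts: the cycles $\Pi, \alpha\Pi, \ldots, \alpha^{g-1}\Pi$ are pairwise vertex-disjoint and jointly carry $2se$ vertices, which is exactly the number of vertices in the middle levels of $\cP_q(5)$; consequently every such vertex lies on exactly one of them. I expect the stabilizer step to be the only real obstacle — one must be sure that $\alpha^m X = X$ for an $r$-dimensional subspace $X$ of $\F_q^5$ forces $s \mid m$, and this is where Lemma~\ref{lem:full_cycle} and Corollary~\ref{cor:gcd} (hence the hypothesis $n = 2k+1$) enter; everything else is bookkeeping.
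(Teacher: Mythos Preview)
Your argument is correct and is precisely the justification the paper leaves implicit: the corollary is stated without proof, as an immediate consequence of Lemma~\ref{Pi_path} together with the counting $g\cdot\frac{s}{g}\cdot 2e = 2se = 2\sbinomq{5}{2}$ and the disjointness of the shifted cycles. Your disjointness step (reducing $(a-b)+(i-i')\ell\equiv 0\pmod s$ modulo $g$) is exactly the intended one, and your concern about the stabilizer is well placed but fully handled by Lemma~\ref{lem:size_E} (equivalently Lemma~\ref{lem:full_cycle} with Corollary~\ref{cor:gcd}).
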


The condition $g=1$ in Corollary~\ref{cor:g=1} is satisfied in many
cases. For example, if $s$ is a prime integer then $g=1$. Moreover,
due to many choices in which the equivalence classes can be ordered,
and many choices for the relative shifts of the $U_i$'s and $V_j$'s,
such that the requirements of Theorem~\ref{thm:long_cycle}
are satisfied, it is reasonable
to believe that in many of them we will have $g=1$. But, we were not able to prove
this and hence we will slightly modify the construction to be able and obtain
a desired Hamiltonian cycle.

If $g>1$ then we consider $\Pi$ as a path in the middle
levels of $\cP_q(5)$ and define the sequence
of vertices (subspaces)
$\Pi , \alpha \Pi , \alpha^2 \Pi ,\ldots , \alpha^{g-1} \Pi$.

\begin{lemma}
\label{lem:Pi_alPi_path}
If $\ell \neq 0$ and $g \neq 1$ then
the sequence of vertices (subspaces) $\Pi , \alpha \Pi$ is a path
with $\frac{s}{g} 4 e$ distinct vertices
in the middle levels of $\cP_q(5)$.
\end{lemma}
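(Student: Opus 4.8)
The plan is to verify two things: that the concatenation $\Pi,\alpha\Pi$ is a walk in the middle levels of $\cP_q(5)$ (its two pieces join up across the seam), and that all $\frac{s}{g}4e$ vertices occurring are distinct, so that this walk is in fact a path.

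First I would pin down the endpoints. Viewed as a path, $\Pi$ starts at $U_1=\Span{\alpha^0,\alpha^1,\alpha^2}$ and ends at the last vertex of $\alpha^{(\frac{s}{g}-1)\ell}P$, namely $\alpha^{(\frac{s}{g}-1)\ell}V_e$ with $V_e=\Span{\alpha^{\ell+1},\alpha^{\ell+2}}$; and since the map $X\mapsto\alpha X$ preserves dimensions and inclusions it carries paths to paths, so $\alpha\Pi$ is a path beginning at $\alpha U_1=\Span{\alpha^1,\alpha^2,\alpha^3}$. The one small computation is that $\alpha^{(\frac{s}{g}-1)\ell}V_e=\Span{\alpha^{\frac{s}{g}\ell+1},\alpha^{\frac{s}{g}\ell+2}}$ collapses to $\Span{\alpha^1,\alpha^2}$: since $g=\gcd(\ell,s)$, the exponent $\frac{s}{g}\ell=\frac{\ell}{g}\cdot s$ is a multiple of $s$, so $\alpha^{\frac{s}{g}\ell}\in\F_q\setminus\{0\}$, and multiplying a representative of a one-dimensional subspace by a nonzero element of $\F_q$ leaves that subspace unchanged. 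As $\Span{\alpha^1,\alpha^2}\subset\Span{\alpha^1,\alpha^2,\alpha^3}=\alpha U_1$, the last vertex of $\Pi$ is adjacent to the first vertex of $\alpha\Pi$, so $\Pi,\alpha\Pi$ is a walk in the middle levels.

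Next I would handle distinctness. By Lemma~\ref{Pi_path}, $\Pi$ has $\frac{s}{g}2e$ distinct vertices, and applying the bijection $X\mapsto\alpha X$ shows $\alpha\Pi$ likewise has $\frac{s}{g}2e$ distinct vertices, so it remains only to rule out a vertex of $\Pi$ equalling a vertex of $\alpha\Pi$. Such a coincidence would read $\alpha^{i\ell}W_j=\alpha^{1+i'\ell}W_{j'}$ with $W\in\{U,V\}$ (a two-dimensional subspace cannot equal a three-dimensional one) and $0\le i,i'\le\frac{s}{g}-1$, $1\le j,j'\le e$. Since $\alpha^{i\ell}W_j$ and $\alpha^{1+i'\ell}W_{j'}$ lie in the equivalence classes (of $E_3$ if $W=U$, of $E_2$ if $W=V$) of $W_j$ and $W_{j'}$, and these classes are all distinct, this forces $j=j'$ and then $\alpha^{(i'-i)\ell+1}W_j=W_j$. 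By Lemma~\ref{lem:size_E} together with the argument in the proof of Lemma~\ref{lem:full_cycle}, each such equivalence class has exactly $s$ elements, so $\alpha^m W_j=W_j$ implies $s\mid m$; hence $s\mid(i'-i)\ell+1$. But $g=\gcd(\ell,s)>1$ divides both $s$ and $\ell$, hence divides $(i'-i)\ell+1$, forcing $g\mid1$, a contradiction. This gives $\frac{s}{g}4e$ distinct vertices, and with the seam already checked, $\Pi,\alpha\Pi$ is the desired path.

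The main (if mild) obstacle will be this last distinctness step: one must see that the offset by $\alpha^1$ between $\Pi$ and $\alpha\Pi$ is incompatible modulo $g$ with the internal offsets by multiples of $\ell$, which is exactly where the hypothesis $g>1$ is used — when $g=1$ this fails and, correctly, Corollary~\ref{cor:g=1} already gives a Hamiltonian cycle directly. The collapsing identity $\alpha^{(\frac{s}{g}-1)\ell}V_e=\Span{\alpha^1,\alpha^2}$ is the other point to get right, and it is precisely what the choice of $U_1$ and $V_e$ made at the start of this section was engineered for.
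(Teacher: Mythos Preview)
Your argument is correct and matches the paper's: both compute the last vertex of $\Pi$ as $\alpha^{(\frac{s}{g}-1)\ell}V_e=\Span{\alpha^1,\alpha^2}$ (using that $\frac{s}{g}\ell$ is a multiple of $s$) and check its inclusion in $\alpha U_1=\Span{\alpha^1,\alpha^2,\alpha^3}$. You actually go beyond the paper by supplying an explicit distinctness argument via the modular incompatibility $g\mid s$, $g\mid\ell$, $g\nmid 1$, whereas the paper simply asserts the vertex count $\frac{s}{g}4e$ without justification.
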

\begin{proof}
By lemma~\ref{Pi_path}, we have that $\Pi$ is a path
in the middle levels of $\cP_q(5)$ and
therefore $\alpha \Pi$ is also a path
in the middle levels of $\cP_q(5)$. The last vertex (subspace) of $\Pi$
is $\alpha^{(\frac{s}{g}-1)\ell} V_e = \alpha^{(\frac{s}{g}-1)\ell} \Span{ \alpha^{\ell+1} , \alpha^{\ell+2} }
=\alpha^{\frac{s}{g} \ell} \Span{ \alpha^1 , \alpha^2 }= \Span{ \alpha^1 , \alpha^2 }$ and the first
vertex (subspace) of $\alpha \Pi$ is $\alpha U_1 = \Span{ \alpha^1 , \alpha^2 , \alpha^3}$.
Therefore, $\alpha^{(\frac{s}{g}-1)\ell} V_e \subset \alpha U_1$
and hence $\Pi , \alpha \Pi$ is a path
with $\frac{s}{g} 4 e$ distinct vertices
in the middle levels of $\cP_q(5)$.
\end{proof}
\begin{cor}
\label{cor:Ham_path}
If $\ell \neq 0$ and $g \neq 1$ then
$T \deff \Pi , \alpha \Pi , \alpha^2 \Pi ,\ldots , \alpha^{g-1} \Pi$
is a Hamiltonian path in the middle levels of $\cP_q(5)$.
\end{cor}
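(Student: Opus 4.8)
\section*{Proof proposal for Corollary~\ref{cor:Ham_path}}

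The plan is to verify the two defining properties of a Hamiltonian path separately: that $T$ is a path in the middle levels of $\cP_q(5)$, and that it visits every vertex of that graph exactly once.

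First I would check that $T$ is a path. Each block $\alpha^m \Pi$ is a path, since $\Pi$ is a path by Lemma~\ref{Pi_path} and applying the map $x \mapsto \alpha^m x$ to every subspace in a path preserves all inclusions and hence yields a path. For the junction between $\alpha^m \Pi$ and $\alpha^{m+1}\Pi$, $0 \leq m \leq g-2$, I would multiply the computation in the proof of Lemma~\ref{lem:Pi_alPi_path} by $\alpha^m$: the last vertex of $\alpha^m\Pi$ is $\alpha^m \Span{\alpha^1,\alpha^2}$, the first vertex of $\alpha^{m+1}\Pi$ is $\alpha^{m+1}U_1 = \alpha^m\Span{\alpha^1,\alpha^2,\alpha^3}$, and since $\alpha^m\Span{\alpha^1,\alpha^2} \subset \alpha^m\Span{\alpha^1,\alpha^2,\alpha^3}$ these two consecutive vertices are adjacent in $\cP_q(5)$. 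So all of $T$ is a single path.

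Next I would count the vertices and show they are distinct. The subspaces occurring in $T$ are exactly $\alpha^{m+i\ell}U_j$ and $\alpha^{m+i\ell}V_j$ for $0 \leq m \leq g-1$, $0 \leq i \leq \frac{s}{g}-1$, $1 \leq j \leq e$. Because $g = \gcd(\ell,s)$, the integers $m+i\ell$ appearing here realize every residue modulo $s$ exactly once (the $g$ outer shifts fill out each length-$g$ window, and the $\frac{s}{g}$ inner shifts by $\alpha^\ell$ move these windows through a complete system of residues). The $U_j$ lie in $e$ distinct equivalence classes of $E_3$ (the classes $\cA_i$ of Lemma~\ref{lem:simple_order}) and the $V_j$ in $e$ distinct equivalence classes of $E_2$ (the classes $\cB_i$), and no three-dimensional subspace equals a two-dimensional one. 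Hence $\alpha^a U_j = \alpha^b U_{j'}$ forces $j=j'$, and then, since each equivalence class of $E_3$ has exactly $s$ elements by Lemma~\ref{lem:size_E}, forces $a \equiv b \pmod s$; by the previous observation this is impossible for distinct index triples $(m,i,j)$. The same argument applies to the $V_j$. Therefore $T$ contains $s e$ distinct three-dimensional subspaces and $s e$ distinct two-dimensional subspaces, with no repetition at all.

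Finally I would close the count: by Corollary~\ref{cor:num_E} there are $e = q^2+1$ equivalence classes of $E_3$, each of size $s$ by Lemma~\ref{lem:size_E}, so $|\cG_q(5,3)| = se$, and likewise $|\cG_q(5,2)| = se$. Thus the $se$ three-dimensional and $se$ two-dimensional subspaces occurring along $T$ exhaust the vertices of the middle levels of $\cP_q(5)$, each visited exactly once, so $T$ is a Hamiltonian path. The one step that needs care is the distinctness bookkeeping across the two nested families of shifts; once one notes that the $g$ shifts by $\alpha$ together with the $\frac{s}{g}$ shifts by $\alpha^\ell$ realize each residue modulo $s$ precisely once, the rest is routine counting.
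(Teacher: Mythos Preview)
Your proposal is correct and follows essentially the same route the paper intends: the paper states the corollary without proof, but its implicit argument combines Lemma~\ref{lem:Pi_alPi_path} (for the junctions between $\alpha^m\Pi$ and $\alpha^{m+1}\Pi$) with the preceding unnumbered corollary (that the $g$ cycles $\Pi,\alpha\Pi,\ldots,\alpha^{g-1}\Pi$ partition the vertex set), and you have simply spelled out both pieces explicitly. Your observation that the exponents $m+i\ell$ with $0\le m\le g-1$, $0\le i\le \frac{s}{g}-1$ hit every residue modulo $s$ exactly once is precisely the content of that unnumbered corollary, so nothing is genuinely different.
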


Corollary~\ref{cor:Ham_path} implies the existence of many Hamiltonian
paths in the middle levels of $\cP_q(5)$ since
by Theorem~\ref{thm:two_three_E}, we can order the equivalence
classes of $E_2$ and $E_3$ in almost any way that we want.
Anyway, our goal is still to show the existence and to construct Hamiltonian
cycles in the middle levels of $\cP_q(5)$. To achieve our goal we will modify
the Hamiltonian path which was obtained to form a Hamiltonian cycle.
This will be done by flipping the tail of the path several times,
preserving that the sequence of subspaces is still a path $\cP_q(5)$,
until a cycle is obtained.

\begin{lemma}
\label{lem:gN0_lN0H}
If $\ell \neq 0$ and $g \neq 1$ then there exists a Hamiltonian cycle in
the middle levels of $\cP_q(5)$.
\end{lemma}
\begin{proof}
Recall that $\Pi = \Span{ \alpha^0 , \alpha^1 , \alpha^2 },
\Span{ \alpha^0 , \alpha^2 } , \ldots , \Span{ \alpha^{\ell+1} ,
\alpha^{\ell+2} , \alpha^{\ell+4} }, \Span{ \alpha^{\ell+1} , \alpha^{\ell+2}},
\Span{ \alpha^\ell , \alpha^{\ell+1} , \alpha^{\ell+2}}$,
$\Span{ \alpha^\ell , \alpha^{\ell+2}}, \ldots,
\Span{ \alpha^1 , \alpha^2 , \alpha^4 }, \Span{ \alpha^1 , \alpha^2 }$.
For a path $Q$, let $r(Q)$ denote the \emph{reverse} of $Q$, i.e.,
$r(Q)$ is generated by taking the vertices of the path $Q$ in reverse order.
Let $\Upsilon_1$ be the prefix of the path $T$ which starts with the vertex
$\Span{ \alpha^0 , \alpha^1 , \alpha^2}$ and ends with the vertex
$\Span{ \alpha^1 , \alpha^2 , \alpha^4}$. If we denote
$T_1 \deff \Upsilon_1 , \Span{ \alpha^1 , \alpha^2 }, \alpha \Pi , \alpha^2 \Pi ,\ldots , \alpha^{g-1} \Pi$
then $T_2 \deff r(\Upsilon_1) , \Span{ \alpha^1 , \alpha^2 }, \alpha \Pi , \alpha^2 \Pi ,\ldots , \alpha^{g-1} \Pi$
is also a Hamiltonian path in the middle levels of~$\cP_q(5)$,
Note, that the first vertex (subspace) of~$r(\Upsilon_1)$ is
$\Span{ \alpha^1 , \alpha^2 , \alpha^4}$. Let $\Upsilon_2$ be the prefix
of $T_2$ which starts with the vertex $\Span{ \alpha^1 , \alpha^2 , \alpha^4}$
and ends with the vertex $\Span{ \alpha^2 , \alpha^3 , \alpha^4}$. If we denote
$T_2 = \Upsilon_2 , \Span{ \alpha^2 , \alpha^4} , \tilde{P}_2$
then $T_3= r(\Upsilon_2) , \Span{ \alpha^2 , \alpha^4} , \tilde{P}_2$.
This is the first step in modifying the Hamiltonian path $T_1$.
Before the $i$-th step we have the Hamiltonian path
$T_{2i-1} \deff \Upsilon_{2i-1} , \Span{ \alpha^{2i-1} , \alpha^{2i} },
\alpha^{2i-1} \Pi , \alpha^{2i} \Pi ,\ldots , \alpha^{g-1} \Pi$,
where $\Upsilon_{2i-1}$ starts with the vertex (subspace)
$\Span{ \alpha^{2i-2} , \alpha^{2i-1} , \alpha^{2i}}$ and ends with
$\Span{ \alpha^{2i-1} , \alpha^{2i} , \alpha^{2i+2}}$.
We form a new Hamiltonian path in the middle levels of $\cP_q(5)$,
$T_{2i} \deff r(\Upsilon_{2i-1}) , \Span{ \alpha^{2i-1} , \alpha^{2i} },
\alpha^{2i-1} \Pi , \alpha^{2i} \Pi ,\ldots , \alpha^{g-1} \Pi$.
Note, that the first vertex (subspace) of $r(\Upsilon_{2i-1})$ is
$\Span{ \alpha^{2i-1} , \alpha^{2i} , \alpha^{2i+2}}$.
Let $\Upsilon_{2i}$ be the prefix of $T_{2i}$ which starts with
$\Span{ \alpha^{2i-1} , \alpha^{2i} , \alpha^{2i+2}}$ and
ends with $\Span{ \alpha^{2i} , \alpha^{2i+1} , \alpha^{2i+2}}$
(which is the first vertex in $\alpha^{2i} \Pi$).
If we denote $T_{2i} = \Upsilon_{2i} ,  \Span{ \alpha^{2i} , \alpha^{2i+2}} , \tilde{P}_{2i}$
then $T_{2i+1} \deff r(\Upsilon_{2i}) ,  \Span{ \alpha^{2i} , \alpha^{2i+2}} , \tilde{P}_{2i}$.
The path $r(\Upsilon_{2i})$ starts with the vertex (subspace)
$\Span{ \alpha^{2i} , \alpha^{2i+1} , \alpha^{2i+2}}$
and ends with the vertex (subspace)
$\Span{ \alpha^{2i-1} , \alpha^{2i} , \alpha^{2i+2}}$. The
next vertex (subspace) in $T_{2i+1}$ is $\Span{ \alpha^{2i} , \alpha^{2i+2}}$
(which is the second vertex in $\alpha^{2i} \Pi$). The last vertex (subspace) in
$\alpha^{2i} \Pi$ is $\Span{ \alpha^{2i+1} , \alpha^{2i+2} }$. Therefore,
we can write
$T_{2i+1} = \Upsilon_{2i+1} , \Span{ \alpha^{2i+1} , \alpha^{2i+2} },
\alpha^{2i+1} \Pi , \alpha^{2i+2} \Pi ,\ldots , \alpha^{g-1} \Pi$
and we can continue with the recursive construction.
The $\frac{g-1}{2}$-th step is the last one.
The outcome of this step is the Hamiltonian path
$T_g = \Upsilon_g , \Span{ \alpha^{2i+1} , \alpha^{2i+2} },
\alpha^{2i+1} \Pi , \alpha^{2i+2} \Pi ,\ldots , \alpha^{g-1} \Pi$.
The first vertex in $T_g$ is
$\Span{ \alpha^g , \alpha^{g+1} , \alpha^{g+2}}$ and
the last vertex in $T_g$ is $\Span{ \alpha^g , \alpha^{g+1}}$.
Therefore $T_g$ is a Hamiltonian cycle.
\end{proof}

We assumed in the process that $\ell \neq 0$.
For $\ell=0$ we have a similar result.
\begin{lemma}
\label{lem:l=0}
If $\ell = 0$ then there exists a Hamiltonian cycle in
the middle levels of $\cP_q(5)$.
\end{lemma}
\begin{proof}
We modify the path $P$ defined in Lemma~\ref{lem:simple_order}
and the paragraph succeeding it as follows.
$U_1,V_1,U_2,V_2,\ldots,U_e , V_e$ is taken as in $P$;
note that $V_e = \Span{ \alpha^1 , \alpha^2 }$.
Let $\Pi=P , \alpha P , \alpha^2 P , \ldots , \alpha^{s-1} P$.
$\Pi$ is a Hamiltonian cycle in the middle levels
of $\cP_q(5)$ as proved in Lemma~\ref{lem:P_alP_path}.
\end{proof}

The fact that the order of the $q^2+1$ equivalence classes of $E_2$
and $q^2+1$ equivalence classes of $E_3$ is arbitrary (with some minor restrictions)
implies immediately more than $(q^2!)^2$ Hamiltonian cycles
obtained by our construction. This number can be increased
as the choices are even more flexible, but we
will omit the details as they are of less importance.
From our main
construction and its modifications as implied
by Corollary~\ref{cor:g=1}, Lemma~\ref{lem:gN0_lN0H},
and Lemma~\ref{lem:l=0} we infer the following theorem.
\begin{theorem}
\label{thm:many_H}
For each power of a prime $q$ there exists
a Hamiltonian cycles in the middle levels of $\cP_q(5)$.
\end{theorem}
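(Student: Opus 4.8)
The plan is to assemble the final theorem entirely from the machinery built in Sections~\ref{sec:cyclic_shifts} and~\ref{sec:Hamilton}. The core of the argument is already present: Lemma~\ref{lem:simple_order} produces an alternating path $P = U_1, V_1, \ldots, U_e, V_e$ through one representative of each equivalence class of $E_3$ and each equivalence class of $E_2$, and the shifting construction (Corollary~\ref{cor:g=1} when $g=1$, Corollary~\ref{cor:Ham_path} together with the tail-flipping recursion when $g>1$, and the separate $\ell = 0$ case) turns this path into an honest Hamiltonian cycle in the middle levels of $\cP_q(5)$. So the only thing left to prove is the word \emph{many}: that the number of such cycles grows (indeed, is at least $(q^2!)^2$), which is exactly what the paragraph preceding the theorem asserts.

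The key steps, in order, are as follows. First I would invoke Lemma~\ref{lem:two_three_E} to observe that the bipartite incidence structure between the $q^2+1$ classes of $E_3$ and the $q^2+1$ classes of $E_2$ is ``rich enough'': every three-dimensional class meets every two-dimensional class (once generically, $q+1$ times for its distinguished partner), so in building the path $P$ of Lemma~\ref{lem:simple_order} the orderings $\cA_1,\dots,\cA_e$ of the $E_3$-classes and $\cB_1,\dots,\cB_e$ of the $E_2$-classes may be chosen essentially freely. Second, I would pin down precisely which choices are constrained: the construction fixes $\cA_1 = [\Span{\alpha^0,\alpha^1,\alpha^2}]$, $\cB_e = [\Span{\alpha^0,\alpha^1}]$, and the endpoint data $U_1, V_1, U_e, V_e$, so the remaining $q^2$ classes of $E_3$ (namely $\cA_2,\dots,\cA_e$) and the remaining $q^2$ classes of $E_2$ (namely $\cB_1,\dots,\cB_{e-1}$) may be permuted arbitrarily. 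That gives at least $(q^2!)\cdot(q^2!) = (q^2!)^2$ distinct ordered pairs of orderings. Third, I would argue that distinct orderings yield distinct Hamiltonian cycles: two cycles built from orderings that differ in the position of some class $\cA_j$ (or $\cB_j$) differ in which $E_2$-neighbor of a given $E_3$-representative is used at the corresponding step of $P$, hence differ as edge sets of the middle-levels graph; this property is preserved under the cyclic shifting and the tail-flipping, since those operations act uniformly on all the shifted copies of $P$. Fourth, I would note (as the paper does) that even more cycles arise from the freedom in the relative shifts of the chosen representatives $U_i, V_i$ within their classes and from the choice of $\ell$, but these need not be quantified.

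The main obstacle is the third step: carefully verifying that the map from (ordering of $E_3$-classes, ordering of $E_2$-classes, choice of representatives and $\ell$) to Hamiltonian cycles is injective, or at least that its fibers are small enough that the count $(q^2!)^2$ survives. One has to be slightly careful because a cyclic shift of the whole cycle, or reversal, could in principle identify two superficially different orderings; and the tail-flipping operations in the $g>1$ case reshuffle the terminal portion of the path, so one must check that the ``combinatorial fingerprint'' (the cyclic sequence of $E_3$- and $E_2$-classes visited, read as an unordered edge set) still distinguishes cycles coming from genuinely different orderings of $\cA_2,\dots,\cA_e$ and $\cB_1,\dots,\cB_{e-1}$. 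Since the flipping only reverses prefixes and leaves the \emph{set} of visited subspaces and the set of used edges unchanged on the flipped segment up to the reversal, this should go through, but it is the one place where a short honest argument is required rather than a citation of an earlier lemma.

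\begin{proof}
By Lemma~\ref{lem:simple_order}, for any ordering $\cA_1,\dots,\cA_e$ of the $e = q^2+1$ equivalence classes of $E_3$ with $\cA_1 = [\Span{\alpha^0,\alpha^1,\alpha^2}]$ and any ordering $\cB_1,\dots,\cB_e$ of the $q^2+1$ equivalence classes of $E_2$ with $\cB_e = [\Span{\alpha^0,\alpha^1}]$, there is an alternating path $P = U_1,V_1,\dots,U_e,V_e$ in the middle levels of $\cP_q(5)$ with $U_i \in \cA_i$, $V_i \in \cB_i$, and the incidences required by requirements P.1--P.5 of Theorem~\ref{thm:long_cycle}. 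Fixing the endpoint data $U_1 = \Span{\alpha^0,\alpha^1,\alpha^2}$, $V_1 = \Span{\alpha^0,\alpha^2}$, $U_e = \Span{\alpha^{\ell+1},\alpha^{\ell+2},\alpha^{\ell+4}}$, $V_e = \Span{\alpha^{\ell+1},\alpha^{\ell+2}}$ as in Section~\ref{sec:Hamilton}, Lemmas~\ref{lem:same_EQ1} and~\ref{lem:same_EQ2} guarantee that $U_1,V_1,U_e,V_e$ lie in four distinct equivalence classes, so requirement P.6 is met and the shifting construction applies. If $g = \gcd(\ell,s) = 1$ then $\Pi = P,\alpha^\ell P,\dots,\alpha^{(s-1)\ell}P$ is a Hamiltonian cycle by Corollary~\ref{cor:g=1}; if $g > 1$ then by Corollary~\ref{cor:Ham_path} $T = \Pi,\alpha\Pi,\dots,\alpha^{g-1}\Pi$ is a Hamiltonian path, and the tail-flipping recursion $T_1,T_2,\dots,T_g$ described in Section~\ref{sec:Hamilton} converts it into a Hamiltonian cycle $T_g$; the case $\ell = 0$ is handled by taking $\Pi = P,\alpha P,\dots,\alpha^{s-1}P$, which is a Hamiltonian cycle by Lemma~\ref{lem:P_alP_path}. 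Hence a Hamiltonian cycle in the middle levels of $\cP_q(5)$ exists.

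It remains to see that many such cycles arise. In the above, the classes $\cA_1$, $\cB_e$ are fixed but the remaining $q^2$ classes $\cA_2,\dots,\cA_e$ of $E_3$ may be placed in any order, and independently the remaining $q^2$ classes $\cB_1,\dots,\cB_{e-1}$ of $E_2$ may be placed in any order; by Lemma~\ref{lem:two_three_E} each such ordering is realizable by a path as in Lemma~\ref{lem:simple_order}. This gives at least $(q^2!)$ choices for the $E_3$-ordering and $(q^2!)$ choices for the $E_2$-ordering, hence at least $(q^2!)^2$ pairs. For two distinct pairs of orderings, the cyclic sequence of equivalence classes (of $E_2$ and of $E_3$) visited by the resulting cycle, read as a cyclic word up to rotation and reflection, differs; since the cyclic shifting $\Pi = P,\alpha^\ell P,\dots$ acts by the same permutation on the class-sequence in every block, and the tail-flipping only reverses prefixes of the path — leaving the underlying edge set, and in particular the multiset of consecutive class-pairs, invariant up to reversal — the resulting cyclic class-word still determines, and is determined by, the chosen orderings up to the global rotation/reflection of the cycle. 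As $q \to \infty$ the number of orderings $(q^2!)^2$ dwarfs the number of rotations and reflections of a single cycle, so the number of distinct Hamiltonian cycles produced is at least $(q^2!)^2$ divided by a polynomial factor, hence is very large. (Additional freedom in the relative shifts of the representatives $U_i,V_i$ within their classes, and in the choice of $\ell$, only increases this count.)
\end{proof}
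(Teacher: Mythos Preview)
Your proposal is correct and follows essentially the same route as the paper: you assemble the Hamiltonian cycle from the construction of Section~\ref{sec:Hamilton} (Lemma~\ref{lem:simple_order}, Corollary~\ref{cor:g=1}, Corollary~\ref{cor:Ham_path}, the tail-flipping recursion, and the $\ell=0$ case), and then count orderings of the $E_2$- and $E_3$-classes to get the lower bound $(q^2!)^2$, which is exactly what the paper does in the paragraph immediately preceding the theorem. Your attempt to justify that distinct orderings yield distinct cycles goes a bit beyond what the paper actually proves (the paper simply asserts this), and note that since $\cA_e$ and $\cB_1$ are also pinned down by the fixed choices of $U_e$ and $V_1$, the honest free count is $((q^2-1)!)^2$ rather than $(q^2!)^2$ --- but this does not affect the conclusion ``many'', and the paper is equally loose on this point.
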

\vspace{0.2cm}
Note that by Theorem~\ref{thm:many_H} there exists
a Hamiltonian cycle in the middle levels of $\cP_q(5)$,
but our discussion make it possible to produce many such
cycles. Unfortunately, we don't know how to count their number.

\section{Conclusion and problems for future research}
\label{sec:conclusion}

The well-known middle levels problem was considered for the
$k$-dimensional subspaces and the $(k+1)$-dimensional subspaces
of $\F_q^{2k+1}$. We have presented a technique based on cyclic
shifts of $k$-dimensional subspaces and $(k+1)$-dimensional
subspaces. The technique was applied successfully for
all $q$'s with $1 \leq k \leq 2$.
Applying the technique for larger values of $k$ to find Hamiltonian cycles
or long cycles in the middle levels of $\cP_q(2k+1)$ is the main goal
for future research. We note that if $k > 2$ then the degree of a vertex $v$,
representing a $(k+1)$-dimensional subspace of $\cP_q(2k+1)$,
is $\frac{q^{k+1}-1}{q-1}$. The number of equivalence classes
of $E_k$ will be $\sbinomq{2k+1}{k} \frac{q-1}{q^{2k+1}-1}$ which
is much larger than $\frac{q^{k+1}-1}{q-1}$ and hence $v$ won't be connected
with an edge to a vertex from each equivalence class of $E_k$.
Thus, we won't have a result similar to Theorem~\ref{thm:two_three_E}
and hence constructing the Hamiltonian path will be more difficult,
even though it looks that there is enough freedom and we have the following conjecture.

\begin{conj}
For any integer $k \geq 1$, and for any
power of a prime $q$, there exists a Hamiltonian cycle
in the middle levels of $\cP_q(2k+1)$.
\end{conj}


\begin{center}
{\bf Acknowlegment}
\end{center}
The author would like to thank Ronny Roth for many valuable
discussions which have contributed to the content of the paper.
He also thanks two anonymous reviewers whose careful reading
and well spotted comments contribute to amend this paper.


\begin{thebibliography}{99}
\bibitem{Dej85}
    {\sc I. J. Dejter,}
    {\sl Hamilton cycles and quotients of bipartite graphs},
    in {\em Graphs and other combinatorial topics} (Kalamazoo, Michigan 1984),
    Wiley, New York (1985), 189--199.
\bibitem{Hav83}
    {\sc I. Havel,}
    {\sl Semipaths in directed cubes},
    in {\em Teubner-Texte Math.}, 59 (1983), 101--108;
    {\em Graphs and other combinatorial topics} (Prague, 1982).
\bibitem{JBM09}
     {\sc B. W. Jackson, J. Buhler, and R. Mayer,}
     {\sl A recursive construction for universal cycles of 2-subspaces},
     {\em Discrete Math.}, 309 (2009), 5328--5331.
\bibitem{Joh04}
    {\sc J. R. Johnson,}
    {\sl Long cycles in the middle two layers of the discrete cube,}
    {\em Journal Combinatorial Theory, Series A,} 105 (2004), 255--271.
\bibitem{SaWi95}
    {\sc C. D. Savage and P. Winkler,}
    {\sl Monotone Gray codes and the Middle levels problem},
         {\em Journal of Combinatorial Theory, Series A}, 70 (1995), 230--248.
\bibitem{SSS09}
     {\sc I. Shields, B. J. Shields, and C. D. Savage,}
     {\sl An update on the middle levels problem},
     {\em Discrete Math.}, 309 (2009), 5271--5277.
\bibitem{ShAm11}
     {\sc M. Shimada and K. Amano,}
     {\sl A note on the middle levels conjecture,}
     {\em arxiv.org/abs/0912.4564}, September 2011.
\bibitem{Sin38}
    {\sc J. Singer,}
    {\sl A theorem in finite projective geometry and
          some applications to number theory,}
    {\em Trans. of the American Mathematical Society,} 43 (1938), 377--385.
\end{thebibliography}

\end{document}